\documentclass[a4paper,12pt]{amsart}
\pdfoutput=1

\usepackage{amssymb,amsfonts,amsmath,amsthm}
\usepackage{amsrefs,mathtools,stmaryrd}
\usepackage{xcolor}
\usepackage{float}
\usepackage{tikz}
\usepackage{verbatim}
\usepackage[english]{babel}
\usepackage[all]{xy}
\usepackage{doi}
\usepackage{booktabs}
\usepackage{etoolbox}

\usepackage[margin=1in]{geometry}

\sloppy
\def\={\; = \;}
\def\+{\; + \;}

\def\:{\; \colon \;}

\newcommand{\Mod}[1]{\ (\text{mod}\ #1)}

\newcommand{\es}[1]{\; #1 \;}

\newcommand{\sm}{\smallsetminus}

\newcommand{\RR}{{\mathbb{R}}}
\newcommand{\ZZ}{{\mathbb{Z}}}
\newcommand{\PP}{{\mathbb{P}}}
\newcommand{\CC}{{\mathbb{C}}}
\newcommand{\FF}{{\mathbb{F}}}
\newcommand{\TT}{{\mathbb{T}}}

\newcommand{\QQ}{{\mathbb{Q}}}

\newcommand{\tr}{\operatorname{tr}}
\newcommand{\diag}{\operatorname{diag}}

\newcommand{\SL}{\operatorname{SL}}
\newcommand{\GL}{\operatorname{GL}}
\newcommand{\PSL}{\operatorname{PSL}}
\newcommand{\Sp}{\operatorname{Sp}}

\newcommand{\Gal}{\operatorname{Gal}}
\newcommand{\disc}{\operatorname{disc}}
\newcommand{\irr}{{\operatorname{irr}}}

\newcommand{\calM}{{\mathcal M}}

\newcommand{\calG}{{\mathcal G}}
\newcommand{\calO}{{\mathcal O}}

\theoremstyle{plain}

\newtheorem{proposition}{Proposition}

\title{Goursat rigid local systems of rank four}
\date{}
\author{Danylo~Radchenko}
\address{Max Planck Institute for Mathematics\\
Vivatsgasse 7\\
53111 Bonn, Germany}
\email{danradchenko@gmail.com}
\author{Fernando~Rodriguez~Villegas}
\address{The Abdus Salam International Centre for Theoretical Physics\\
Strada Costiera 11\\
Trieste 34151, Italy}
\email{villegas@ictp.it}

\begin{document}
\maketitle
\begin{center}
{\it Dedicated to Don Zagier for his $65$th birthday}
\end{center}

\begin{abstract}
  We study the general properties of certain rank $4$ rigid local
  systems considered by Goursat. We analyze when they are irreducible,
  give an explicit integral description as well as the invariant
  Hermitian form $H$ when it exists. By a computer search we find what
  we expect are all irreducible such systems all whose solutions are
  algebraic functions and give several explicit examples defined over
  $\QQ$.  We also exhibit one example with infinite monodromy as
  arising from a family of genus two curves.
\end{abstract}
\section{Introduction}

The question of when linear differential equations in a variable $t$
have all of their solutions algebraic functions of $t$
goes back to the early 1800's. In his 1897 thesis written under the
supervision of
P.~Painlev\'e\cite{Boulanger},\cite{Painleve},\cite{Singer},
A. Boulanger mentions a paper by~J. Liouville of 1833~\cite{Liouville}
as a possible first work on the matter. The introduction of
Boulanger's thesis offers a lucid description of the history of the
question up to the time of his writing.

Schwarz~\cite{Schwarz} famously described all cases of algebraic
solutions to the hypergeometric equation satisfied by Gauss's series
${}_2F_1$. This was much later extended to hypergeometric equations of
all orders by Beukers and Heckman~\cite{Beukers-Heckman}. In what
follows we will often refer to the better known hypergeometric local
systems for comparison.

From a broader point of view, we may say that differential equations
with all solutions algebraic are a special case of motivic local
systems.  Simpson conjectures in~\cite[p. 9]{Simpson-1} that all rigid
local systems satisfying some natural conditions are motivic. Without
attempting a rigorous definition of what this means, we will just say
that such systems should be geometric in nature. This is known for
rigid local systems on $\PP^1$
by the work of Katz~\cite{Katz}, who gave a general algorithm (using
middle convolution) for their construction. See
also~\cite{Esnault-Groechenig} for systems over a higher dimensional
base and~\cite{Zagier-1} for more on differential equations and
arithmetic.

In this note we consider the case of Goursat's case II of rank $4$
rigid local systems (denoted henceforth by G-II). These correspond to
order $4$ linear differential equations with three regular singular
points, say $t=0,1,\infty$, with semisimple, finite order local
monodromies with eigenvalues of multiplicities $2 1^2,2^2,1^4$
respectively.

We study the general properties of G-II systems; for example, we
analyze when they are irreducible and describe the invariant Hermitian
form $H$ when it exists. As in~\cite{Beukers-Heckman} $H$ is a key
tool to understand when the monodromy group is finite. Indeed, a
necessary condition is that $H$ be definite in every complex embedding
of the field of definition. Finiteness of the monodromy group is
equivalent to solutions to the linear differential equations being
algebraic.

We also show explicitly that the monodromy group can be defined in an
integral way in terms of the eigenvalues of the local monodromies (the
defining data). This would follow if the rigid system was motivic
(see~\cite[p.~9]{Simpson-1}). We find (see~\S\ref{cocycle}) that there
is a non-trivial obstruction for the field of definition of the
monodromy group. It may not be possible to define the monodromy group
in its field of moduli (the field of coefficients of the
characteristic polynomials of the local monodromies). This is in
contrast with the hypergeometric case, for example, where by a theorem
of Levelt~\cite[Prop.~3.3]{Beukers-Heckman} such an obstruction does
not occur. For the G-II systems the obstruction is given by a
quaternion algebra over the field of moduli.

By a computer search we find what we expect are all irreducible G-II
equations whose solutions are algebraic functions and give several
explicit examples defined over $\QQ$.  We also exhibit one example
with infinite monodromy as arising from a family of genus two curves.

We present in this paper our results with few detailed proofs, which
will appear in a subsequent work.  We used MAGMA~\cite{MAGMA} and
PARI-GP~\cite{PARI2} for most of the calculations.

\section*{Acknowledgements}
This work was started at the Abdus Salam Centre for Theoretical
Physics and completed during the special trimester {\it Periods in
  Number Theory, Algebraic Geometry and Physics} at the Hausdorff
Institute of Mathematics in Bonn, Germany. We would like to thank
these institutions as well as the Max Planck Institute for Mathematics
in Bonn for their hospitality and support. 

The second author would like to thank N.~Katz and D.~Roberts for
useful exchanges regarding the subject of this work.

\section{Rigid local systems}
\label{rls}

Following the setup and notation of~\cite{HLRV} we consider the
character variety $\calM_\mu$ where $\mu$ is an ordered $k$-tuple
of partitions of a positive integer $n$.
This variety parametrizes representations of
$\pi_1(\Sigma\setminus S,*)$
to $\GL_n(\CC)$ mapping a small oriented loop around $s\in S$
to a semisimple conjugacy class $C_s$ 
whose generic eigenvalues have multiplicities
$\mu^s=(\mu^s_1,\mu^s_2,\ldots)$, a corresponding partition in $\mu$.

Here $\Sigma$ is a Riemann surface of genus $g$
and $S$ is a finite set of $k$
points. The eigenvalues are assumed generic in the sense
of~\cite{HLRV}. If non-empty the variety $\calM_\mu$ is equidimensional
of dimension
$$
d_\mu:= (2g-2+k)n^2-\sum_{s\in S}\sum_{i\geq 1} (\mu^s_i)^2 +2.
$$
In this paper we will only consider the case where $g=0$ and in detail
when $k=3,n=4$ and, taking $S=\{0,1,\infty\}$, the partitions are
$\mu^0=2 1^2,\mu^1=2^2,\mu^\infty=1^4$. 

To be concrete, if $g=0$, given conjugacy classes
$C_1,\ldots,C_k\subseteq \GL_n(\CC)$ and labeling the punctures with
$1,\ldots,k$, we are looking for solutions to
\begin{equation}
\label{prod-mat}
T_1\cdots T_k =I_n, \qquad T_s\in C_s, \qquad s=1,\ldots, k,
\end{equation}
where $I_n$ is the identity matrix, up to simultaneous conjugation.
Given such a representation $\pi_1(\Sigma\setminus S,*)$ we call the
image in $\Gamma:=\langle T_1,\ldots, T_k\rangle \leq \GL_n(\CC)$ the {\it
geometric monodromy} group. It is well defined up to
conjugation.

Goursat in his remarkable 1886 paper~\cite{Goursat} discusses when the
local monodromy data uniquely determines the representation, or in
terms of the differential equation and in later terminology, when are
there no {\it accessory parameters}. We want local conditions that
guarantee the following. Given two $k$-tuples of matrices $T_s\in C_s$
and $T'_s\in C_s$ for $s\in S$ satisfying~\eqref{prod-mat} there
exists a single $U\in \GL_n(\CC)$ such that $T_s'=UT_sU^{-1}$ for all
$s\in S$. The corresponding local systems (determined by the local
solutions to the linear differential equation) are known as {\it rigid
  local systems}~\cite{Katz}.

To have a rigid local system is to say that $\calM_\mu$ consists of a
single point. Therefore it is necessary that the expected dimension
$d_\mu$ be zero. This is precisely Goursat's condition~\cite[(5)
p.113]{Goursat} (he only considers the case of $g=0$) as well as
Katz's~\cite{Katz}, which follows from cohomological considerations.

We assume from now on that $g=0$ and then to avoid trivial cases
that $k\geq 3$.  Indeed, for $g=0,k=1$ the group
$\pi_1(\Sigma\setminus S,*)$ is trivial and for $g=0,k=2$ it is
isomorphic to $\ZZ$.  Note, as Goursat points out, that adding an
extra puncture to $S$ with associated partition $(n)$ does not change
the value of $d_\mu$.  Such points correspond to apparent
singularities in the differential equation and may hence be safely
ignored. We will assume then that the partitions $\mu_s$ have at least
two parts.

Goursat shows that with the given assumptions $k\leq n+1$~\cite[top
p.114]{Goursat} and hence there are only finitely many solutions of
$d_\mu=0$ for fixed $n$. He  lists~\cite[p.115]{Goursat} the cases
of $d_\mu=0$ for $n=3$ and  $n=4$ (see below).

It turns out, however, that the condition $d_\mu=0$
is not sufficient as the variety $\calM_\mu$
might be empty. Crawley-Boevey~\cite{Crawley-Boevey} proved that a
necessary and sufficient condition for $\calM_\mu$
to be a point, in the case of generic eigenvalues we are considering,
is that $\mu$
corresponds to a real root of the associated Kac-Moody algebra.

Without getting too deeply into the details of this condition we
present an algorithm that will allow us to determine when $\mu$
represents a real root. This algorithm ultimately corresponds to
Katz's middle convolution.

 It is more convenient to present
the multiplicity data $\mu$
in the form of a star graph with one central node and $k$
legs. We illustrate this in our basic case G-II (Goursat's label II
for $n=4$).

\begin{center}
\begin{tikzpicture}[every node/.style={fill=white}] 
    \draw (-1,0) node[anchor=east]  {G-II};

        \draw (2,1) -- (2,0);

	\draw (0,0) -- (1,0) -- (2,0) -- (3,0) -- (4,0) -- (5,0);

	\node at (2,1) {$2$};
	\node at (0,0) {$1$};
	\node at (1,0) {$2$};
	\node at (2,0) {$4$};
	\node at (3,0) {$3$};
	\node at (4,0) {$2$};
	\node at (5,0) {$1$};

\end{tikzpicture}
\end{center}
The partitions can be read by starting at the central node and moving
away along a leg. The succesive differences of the respective node
values are the parts of the corresponding partition. Nodes with a zero
value are not included; hence each leg has a finite length equal to
that of the corresponding partition.

 The algorithm proceeds starting from a configuration as
 above corresponding to an ordered $k$-tuple $\mu$ of partitions of
 $n$ satisfying $d_\mu=0$ using the following moves.
\begin{itemize}
\item
A: Replace the value $n$ at the central node by 
$$
\sum_i n_i -n,
$$
where $n_i$ are the values at the nodes closest to the central node.
\item
B: Shrink to a point any segment whose endpoints values are the same.
\item C: For each leg put new values on the nodes (not including the
  central node) so that the set of differences of consecutive values
  remains the same but appear in non-decreasing order as one moves
  away from the central node along the leg (so that they correspond to
  a partition of the value at the central node).
\end{itemize}
The goal is to use a sequence of these moves to reach the 
terminal configuration of just a central node with value $1$.
Under the assumptions $d_\mu=0,g=0$ applying A 
strictly decreases the value at the central node and hence the
algorithm always terminates. Indeed, for any partition $\mu=\mu_1\geq
\mu_2\geq\cdots$ of $n$ we have $n\mu_1\geq \sum_i\mu_i^2$. It
follows that if $d_\mu=0$ 
$$
n\sum_s\mu_1^s>(2g-2+k)n^2
$$
and since also $g=0$ and $n>0$ that $\sum_s\mu_1^s>(k-2)n$ which proves the claim.

For our running example $\mu=(2 1^2,2^2,1^4)$ the algorithm works as
follows.

\bigskip
 Apply A:
\begin{center}
\begin{tikzpicture}[every node/.style={fill=white}] 

        \draw (2,1) -- (2,0);

	\draw (0,0) -- (1,0) -- (2,0) -- (3,0) -- (4,0) -- (5,0);

	\node at (2,1) {$2$};
	\node at (0,0) {$1$};
	\node at (1,0) {$2$};
	\node at (2,0) {$3$};
	\node at (3,0) {$3$};
	\node at (4,0) {$2$};
	\node at (5,0) {$1$};

\end{tikzpicture}
\end{center}

Apply B:
\begin{center}
\begin{tikzpicture}[every node/.style={fill=white}] 

        \draw (2,1) -- (2,0);

	\draw (0,0) -- (1,0) -- (2,0) -- (3,0) -- (4,0);

	\node at (2,1) {$2$};
	\node at (0,0) {$1$};
	\node at (1,0) {$2$};
	\node at (2,0) {$3$};
	\node at (3,0) {$2$};
	\node at (4,0) {$1$};

\end{tikzpicture}
\end{center}

Apply C:
\begin{center}
\begin{tikzpicture}[every node/.style={fill=white}] 

        \draw (2,1) -- (2,0);

	\draw (0,0) -- (1,0) -- (2,0) -- (3,0) -- (4,0);

	\node at (2,1) {$1$};
	\node at (0,0) {$1$};
	\node at (1,0) {$2$};
	\node at (2,0) {$3$};
	\node at (3,0) {$2$};
	\node at (4,0) {$1$};

\end{tikzpicture}
\end{center}
We have arrived at the case $\mu=(1^3,21,1^3)$ that corresponds
to the hypergeometric equation of order $3$. It is easy to see that a
next stage takes us to $\mu=(1^2,1^2,1^2)$ corresponding to the
hypergeometric equation of rank $2$ and finally to the desired
terminal case. This confirms that indeed G-II corresponds to a rigid
local system. 

The algorithm fails if at any stage we cannot perform C; i.e. applying
A yields a graph with a central value strictly smaller than one of its
neighbors. As Goursat points out this happens for his case IV.
\begin{center}
\begin{tikzpicture}[every node/.style={fill=white}] 
    \draw (-1,0) node[anchor=east]  {G-IV};

        \draw (1,1) -- (1,0);

        \draw (1,-1) -- (1,0);

	\draw (0,0) -- (1,0) -- (2,0) -- (3,0) -- (4,0);

	\node at (1,1) {$1$};
	\node at (0,0) {$1$};
	\node at (1,-1) {$1$};
	\node at (1,0) {$4$};
	\node at (2,0) {$3$};
	\node at (3,0) {$2$};
	\node at (4,0) {$1$};

\end{tikzpicture}
\end{center}

Apply A:
\begin{center}
\begin{tikzpicture}[every node/.style={fill=white}] 
    \draw (-1,0) node[anchor=east]  {G-IV};

        \draw (1,1) -- (1,0);

        \draw (1,-1) -- (1,0);

	\draw (0,0) -- (1,0) -- (2,0) -- (3,0) -- (4,0);

	\node at (1,1) {$1$};
	\node at (0,0) {$1$};
	\node at (1,-1) {$1$};
	\node at (1,0) {$2$};
	\node at (2,0) {$3$};
	\node at (3,0) {$2$};
	\node at (4,0) {$1$};

\end{tikzpicture}
\end{center}
Since $2<3$ we cannot apply C on the leg going off to the right (one
of the parts would have to be $2-3=-1$).

Here are the diagrams of all rank $n=4$ rigid local systems of the
type in question and their corresponding label in Goursat's
paper (all but the case IV just discussed actually correspond to a
rigid local system). 

\bigskip
\begin{tikzpicture}[every node/.style={fill=white}] 
    \draw (-1,0) node[anchor=east]  {G-I};

        \draw (3,1) -- (3,0);

	\draw (0,0) -- (1,0) -- (2,0) -- (3,0) -- (4,0) -- (5,0) -- (6,0);

	\node at (3,1) {$1$};
	\node at (0,0) {$1$};
	\node at (1,0) {$2$};
	\node at (2,0) {$3$};
	\node at (3,0) {$4$};
	\node at (4,0) {$3$};
	\node at (5,0) {$2$};
	\node at (6,0) {$1$};

\end{tikzpicture}

\bigskip

\begin{tikzpicture}[every node/.style={fill=white}] 
    \draw (-1,0) node[anchor=east]  {G-II};

        \draw (2,1) -- (2,0);

	\draw (0,0) -- (1,0) -- (2,0) -- (3,0) -- (4,0) -- (5,0);

	\node at (2,1) {$2$};
	\node at (0,0) {$1$};
	\node at (1,0) {$2$};
	\node at (2,0) {$4$};
	\node at (3,0) {$3$};
	\node at (4,0) {$2$};
	\node at (5,0) {$1$};

\end{tikzpicture}

\bigskip

\begin{tikzpicture}[every node/.style={fill=white}] 
    \draw (-1,0) node[anchor=east]  {G-III};

        \draw (2,2) -- (2,1) -- (2,0);

	\draw (0,0) -- (1,0) -- (2,0) -- (3,0) -- (4,0);

	\node at (2,2) {$1$};
	\node at (2,1) {$2$};

	\node at (0,0) {$1$};
	\node at (1,0) {$2$};
	\node at (2,0) {$4$};
	\node at (3,0) {$2$};
	\node at (4,0) {$1$};

\end{tikzpicture}

\bigskip
\begin{tikzpicture}[every node/.style={fill=white}] 
    \draw (-1,0) node[anchor=east]  {G-V};

        \draw (1,1) -- (1,0);

        \draw (1,-1) -- (1,0);

	\draw (0,0) -- (1,0) -- (2,0) -- (3,0);

	\node at (1,1) {$2$};
	\node at (0,0) {$1$};
	\node at (1,-1) {$1$};
	\node at (1,0) {$4$};
	\node at (2,0) {$2$};
	\node at (3,0) {$1$};

\end{tikzpicture}

\bigskip
\begin{tikzpicture}[every node/.style={fill=white}] 
    \draw (-1,0) node[anchor=east]  {G-VI};

        \draw (1,1) -- (1,0);

        \draw (1,-1) -- (1,0);

	\draw (0,0) -- (1,0) -- (2,0);

	\node at (1,1) {$2$};
	\node at (0,0) {$2$};
	\node at (1,-1) {$2$};
	\node at (1,0) {$4$};
	\node at (2,0) {$1$};

\end{tikzpicture}

\bigskip
\begin{tikzpicture}[every node/.style={fill=white}] 
    \draw (-1,0) node[anchor=east]  {G-VII};

        \draw (2,0) -- (1,0);

        \draw (1+0.309,0.951) -- (1,0);

	\draw (1-0.809,0.587) -- (1,0);

	\draw (1-0.809,-0.587) -- (1,0);

	\draw (1+0.309,-0.951) -- (1,0);

        \node at (2,0) {$1$};
        \node at (1+0.309,0.951) {$1$};
	\node at (1-0.809,0.587) {$1$};
	\node at (1-0.809,-0.587) {$1$};
	\node at (1+0.309,-0.951) {$1$};

	\node at (1,0) {$4$};

\end{tikzpicture}

\section{Field of definition and field of moduli}
\label{cocycle}

Given a rigid local system with conjugacy classes $C_s$ for $s\in S$
as in~\S\ref{rls} let $q_s(T)$ be the characteristic polynomial of any
element of $C_s$. Let $K$ be the field obtained by adjoining to $\QQ$
the coefficients of all $q_s$. We call $K$ the {\it field of moduli}
or simply the {\it trace field} of the local system (see below for a
justification for this name). It is the smallest field $F$ over which
local monodromies $T_s\in \GL_n(F)$ of the required kind, i.e.,
$T_s\in C_s$, may exist.  But as is typical in such problems it does
not mean that we can actually choose $F=K$.

Given a collection of local monodromies giving rise to our local
system we call its {\it field of definition} the smallest extension
$F$ of $\QQ$ containing all of their entries. We necessarily have
$K\subseteq F$. Note that by Levelt's
theorem~\cite[Prop. 3.3]{Beukers-Heckman}, in the hypergeometric case
we can always take $F=K$, but this is not the case for Goursat's case
II that we analyze here (see~\S\ref{integrality}).

Let $T_s\in C_s$ be a $k$-tuple of matrices in $\GL_n(\bar \QQ)$
satisfying~\eqref{prod-mat}. 
It is clear that for $\sigma\in \Gal(\bar \QQ/K)$ the $k$-tuple
$T_s^\sigma$ is another solution to~\eqref{prod-mat}. Hence by
rigidity there exists $X_\sigma\in \GL_n(\bar \QQ)$ such that
\begin{equation}
\label{conj-local-monodr}
X_\sigma^{-1}T_sX_\sigma=T_s^\sigma, \qquad s\in S.
\end{equation}
Again by rigidity we find that there exists $a_{\sigma,\tau}\in \bar
\QQ$ such that
$$
X_\sigma X_\tau^\sigma=a_{\sigma,\tau}X_{\sigma\tau}.
$$
The map $(\sigma,\tau)\mapsto a_{\sigma,\tau}$ is a $2$-cocycle giving
a well defined element~$\xi \in H^2(\Gal(\bar K/K),K^\times)$.

The following is a standard result.
\begin{proposition}
There exists a solution to~\eqref{prod-mat} over $K$ if and only if
$\xi$ is trivial.
\end{proposition}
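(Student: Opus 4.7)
The plan is to recognize this as a standard Galois descent calculation: rigidity plus (absolute) irreducibility let us read off $X_\sigma$ up to scalar via Schur's lemma, and the obstruction to rescaling it into a 1-cocycle is precisely the class $\xi \in H^2$. Once that is set up, Hilbert 90 handles the rest. Throughout I tacitly use that the rigid local system we are considering is absolutely irreducible (so the centralizer of $\{T_s\}$ is just the scalars), which is what guarantees $X_\sigma$ is unique up to scalar and that $a_{\sigma,\tau}$ is a well-defined element of $\bar K^\times$.

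For the forward direction, I would assume a solution $(T_s') \in \prod_s C_s$ of~\eqref{prod-mat} exists with entries in $K$. By rigidity applied to the two tuples $(T_s)$ and $(T_s')$, there is $Y\in \GL_n(\bar K)$ with $T_s'=Y^{-1}T_sY$ for every $s\in S$, and $Y$ is unique up to multiplication by a scalar. Applying $\sigma\in \Gal(\bar K/K)$ to this relation and using $T_s' = (T_s')^\sigma$ gives $(Y^\sigma)^{-1} T_s^\sigma Y^\sigma = Y^{-1} T_s Y$, so $Y (Y^\sigma)^{-1}$ conjugates $T_s$ to $T_s^\sigma$ exactly as $X_\sigma$ does in~\eqref{conj-local-monodr}. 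Uniqueness-up-to-scalar of the conjugator therefore produces $\lambda_\sigma\in \bar K^\times$ with $X_\sigma=\lambda_\sigma Y(Y^\sigma)^{-1}$. Substituting this into the defining relation $X_\sigma X_\tau^\sigma=a_{\sigma,\tau}X_{\sigma\tau}$, the $Y$'s telescope and I read off $a_{\sigma,\tau}=\lambda_\sigma\,\lambda_\tau^\sigma\,\lambda_{\sigma\tau}^{-1}$; hence $\xi$ is trivial.

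For the converse, I would assume $\xi=1$, so there exist scalars $\lambda_\sigma$ with $a_{\sigma,\tau}=\lambda_\sigma\lambda_\tau^\sigma\lambda_{\sigma\tau}^{-1}$. Setting $Y_\sigma:=\lambda_\sigma^{-1}X_\sigma$ a direct computation gives $Y_\sigma Y_\tau^\sigma=Y_{\sigma\tau}$, so $\sigma\mapsto Y_\sigma$ is a continuous $1$-cocycle with values in $\GL_n(\bar K)$. Noncommutative Hilbert 90 ($H^1(\Gal(\bar K/K),\GL_n(\bar K))=1$) provides $Y\in \GL_n(\bar K)$ with $Y_\sigma = Y^{-1}Y^\sigma$, i.e., $X_\sigma=\lambda_\sigma Y^{-1}Y^\sigma$. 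Defining $T_s':=YT_sY^{-1}$, one checks using $X_\sigma^{-1}T_sX_\sigma=T_s^\sigma$ that $(T_s')^\sigma=T_s'$ for all $\sigma$; thus $T_s'\in \GL_n(K)$, $T_s'\in C_s$, and the product relation is preserved under conjugation, giving the desired $K$-rational solution.

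The only real obstacle is making sure the cocycle $a_{\sigma,\tau}$ is intrinsically well defined, independent of the choice of representatives $X_\sigma$ (which uses absolute irreducibility in the form of Schur's lemma) and of the initial $\bar K$-rational solution $(T_s)$ (a change of solution $T_s\mapsto U^{-1}T_sU$ alters $X_\sigma$ by $U(U^\sigma)^{-1}$ up to a scalar, which only changes $\{a_{\sigma,\tau}\}$ by a coboundary); the rest is the formal Hilbert 90 descent sketched above.
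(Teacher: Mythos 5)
The paper states this proposition as a ``standard result'' and provides no proof of its own, so there is nothing to compare against line by line; your job was to supply the standard argument, and you have done so correctly. Your two-sided descent argument is exactly the expected one: for the forward direction you recover $X_\sigma$ from a $K$-rational solution up to scalar and show $a_{\sigma,\tau}$ is a coboundary, and for the converse you rescale $X_\sigma$ into a genuine $1$-cocycle in $\GL_n(\bar K)$ and apply noncommutative Hilbert~90 to descend the tuple to $K$. The appeal to Schur's lemma (hence absolute irreducibility) to make $X_\sigma$ unique up to scalar, the verification that the class $\xi$ is independent of the choice of $X_\sigma$ and of the base solution $(T_s)$, and the remark that continuity holds because everything factors through a finite extension, are all the right points to flag. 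One small notational matter: the paper writes $\xi\in H^2(\Gal(\bar K/K),K^\times)$, but as your computation makes clear the cocycle naturally takes values in $\bar K^\times$; this is almost certainly a typo in the source, and your implicit use of $\bar K^\times$ (equivalently, the Brauer group $\mathrm{Br}(K)$) is the correct reading.
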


Note that~\eqref{conj-local-monodr} implies that the trace of any
product of $T_s$'s is in the trace field $K$. That is, $K$ is indeed
the smallest extension of $\QQ$  containing the traces of all
$T\in\Gamma$.

\section{Explicit solution for the Goursat case II}

In~\cite{Goursat} Goursat writes down an explicit solution 
to~\eqref{prod-mat} for $S=\{0,1,\infty\}$ in the case when $T_0$, $T_1$, and
$T_\infty$ are diagonalizable with spectra $1^2a_1a_2$, $1^2b^2$ and
$c_1c_2c_3c_4$ respectively (assuming that eigenvalues with different
labels are distinct and that $a_1a_2b^2c_1c_2c_3c_4=1$).

Since the triple $(T_0,T_1,T_{\infty})$ is irreducible, the
$1$-eigenspaces for $T_0$ and $T_1$ must have a zero intersection. 
Goursat then shows that in a suitable basis the matrices $T_0$ and $T_1$
are given by
    \begin{equation}
    \label{eq:goursat_form}
    T_0 = \begin{pmatrix}
    1 & 0 & A(1-a_1) & B(1-a_2) \\
    0 & 1 & C(1-a_1) & D(1-a_2) \\
    0 & 0 & a_1 & 0 \\
    0 & 0 & 0 & a_2
    \end{pmatrix},
    \quad\quad
    T_1 = \begin{pmatrix}
    b & 0 & 0 & 0 \\
    0 & b & 0 & 0 \\
    1-b & 0 & 1 & 0 \\
    0 & 1-b & 0 & 1
    \end{pmatrix}.
    \end{equation}
Let $q_{\infty}(x)=(x-c_1)\dots(x-c_4)$ be the characteristic polynomial 
of $T_{\infty}$. A direct computation shows that for given $a_1$, $a_2$, and $b$, the coefficients of $q_{\infty}$ depend linearly on $A$, $D$, 
and $AD-BC$. Conversely, the numbers $A$, $D$, and $AD-BC$ can be found 
from~$q_{\infty}$ by
    \begin{align}
    \begin{split}
    \label{qinf_abcd}
	&\frac{(b-1)(a_1-1)(a_2-a_1)}{b^2a_1^2a_2}\,A \= \frac{a_1^2q_{\infty}(a_1^{-1}) - b^2q_{\infty}(b^{-1})}{a_1-b},\\
    &\frac{(b-1)(a_2-1)(a_1-a_2)}{b^2a_1a_2^2}\,D \= \frac{a_2^2q_{\infty}(a_2^{-1}) - b^2q_{\infty}(b^{-1})}{a_2-b},\\
    &\frac{(b-1)^2(a_1-1)(a_2-1)}{b^2a_1a_2}\,(AD-BC) \= b^2q_{\infty}(b^{-1}).    
    \end{split}
    \end{align}
In particular, these identities imply that $A$, $D$, and $BC$ 
are uniquely determined from the spectra. On the other hand, conjugation
by the diagonal matrix $D=\diag(\lambda,1,\lambda,1)$ preserves the shapes 
of $T_0$ and $T_1$ and maps $(B,C)$ to $(\lambda^{-1} B, \lambda C)$, hence
only the product $BC$ is uniquely determined.

\subsection{Criterion for irreducibility}
We now find a criterion for when the constructed representation 
is irreducible. The eigenmatrices for $T_0$ and $T_1$ are 
    \begin{equation*}
    Z_0 = \begin{pmatrix}
    1 & 0 & -A & -B \\
    0 & 1 & -C & -D \\
    0 & 0 & 1 & 0 \\
    0 & 0 & 0 & 1
    \end{pmatrix},
    \quad\quad
    Z_1 = \begin{pmatrix}
    0 & 0 & -1 & 0 \\
    0 & 0 & 0 & -1 \\
    1 & 0 & 1 & 0 \\
    0 & 1 & 0 & 1
    \end{pmatrix}.
    \end{equation*}

One can easily check the following assertions: if $C=0$, then the subspace of vectors of the form $(*,0,*,0)$ is invariant;
if $B=0$, then the subspace $(0,*,0,*)$ is invariant;
if $AD-BC=0$, then the subspace spanned by $\ker(T_1-I)$ and
the vector $(a,c,0,0)$ is invariant;
if $AD-BC-A-D+1=0$, then the subspace spanned by $\ker(T_1-bI)$
and the vector $(a-1,c,0,0)$ is invariant. Conversely, if 
$V$ is a nontrivial invariant subspace, then considering the
various possibilities for $V$ with respect to the eigenspaces
of $T_0$, we find that one of $B$, $C$, $AD-BC$, 
or $AD-BC-A-D+1$ must vanish.

Thus the representation is irreducible if and only if 
	$$BC(AD-BC)(AD-BC-A-D+1)\ne 0.$$

        To get the description in terms of eigenvalues we use the
        following factorizations:
    \begin{align}
    \begin{split}
    \label{eq:detfactors}
    AD-BC \;\;=\;\; &\frac{a_1a_2(1-bc_1)(1-bc_2)(1-bc_3)(1-bc_4)}{(1-b)^2(1-a_1)(1-a_2)},\\
    BC \;\;=\;\; &\frac{ba_2^3\prod_{1\le i<j\le 4}(1-a_1bc_ic_j)}{(a_1-a_2)^2(1-b)^2(1-a_1)(1-a_2)},\\
    AD-BC-A-D+1 \;\;=\;\;  &\frac{(1-c_1)(1-c_2)(1-c_3)(1-c_4)}{c_1c_2c_3c_4(1-b)^2(1-a_1)(1-a_2)}.
    \end{split}
    \end{align}
    Note that in terms of $q_{\infty}$ this simply becomes
    $q_\infty(1)=0$, $q_{\infty}(b^{-1})=0$, and
    $w_2(q_{\infty})(a_1^{-1}b^{-1})=0$, where
    $w_2(q_{\infty})=\prod_{i<j}(T-c_ic_j)$ is the polynomial whose
    roots are products of all pairs of roots of $q_{\infty}$.  This
    description agrees with the conditions given in
    \cite[p.~10]{Roberts}.

To summarize,  let 
$$
\TT:=\{(a_1,a_2,b,c_1,\ldots,c_4)\,|\, a_1a_2b^2c_1\cdots c_4=1\}\subseteq
S^1\times\cdots\times S^1\sm \Delta
$$
be the space of eigenvalues (taken in the unit circle). Here $\Delta$
is the union of $a_1=1,a_2=1,a_1=a_2,b=1$ and $c_i=c_j$
for $1\leq i<j\leq 4$ guaranteeing that
$(1,1,a_1,a_2),(1,1,b,b),(c_1,\ldots,c_4)$ are the eigenvalues of a
G-II system. Define $\TT^\irr$ as the subset corresponding to
irreducible local systems.  Then we have
$$
\TT^\irr = \TT \sm
\{q_\infty(1)q_{\infty}(b^{-1})w_2(q_{\infty})(a_1^{-1}b^{-1})=0 \}.
$$

The conditions for irreducibility we found can also be obtained
from~\cite[Thm. 1.5]{Crawley-Boevey}. Indeed the required
decompositions of the real root corresponding to G-II are the
following (and their refinements). Let $i_1,\ldots,i_4$ be any
re-ordering of $1,\ldots,4$.

\bigskip

i)    $q_\infty(1)=0$
$$
\begin{array}{ccc|c}
1& a_1& a_2&1 \\
1&1 & b & b\\
c_{i_1}&c_{i_2}&c_{i_3}&c_{i_4}
\end{array},\qquad 
a_1a_2b^2 c_{i_1}c_{i_2}c_{i_3}=1,\qquad
c_{i_4}=1.
$$

ii)  $q_{\infty}(b^{-1})=0$
$$
\begin{array}{ccc|c}
1& a_1& 1 & a_2 \\
1&1 & b & b\\
c_{i_1}&c_{i_2}&c_{i_3}&c_{i_4}
\end{array},\qquad 
a_1a_2b c_{i_1}c_{i_2}c_{i_3}=1,\qquad
b c_{i_4}=1.
$$

iii) $w_2(q_{\infty})(a_1^{-1}b^{-1})=0$
$$
\begin{array}{cc|cc}
1& a_1& 1 & a_2 \\
1&b & 1 & b\\
c_{i_1}&c_{i_2}&c_{i_3}&c_{i_4}
\end{array},\qquad 
a_1b c_{i_1}c_{i_2}=1,\qquad
a_2b c_{i_3}c_{i_4}=1.
$$

\subsection{ Invariant Hermitian form}

Let $T_1,\ldots,T_k$ correspond to an irreducible local system. Assume
that there exists a non-zero Hermitian form $H$ invariant under
$\Gamma$.
\begin{equation}
\label{herm-condition}
{T_s}^{*}HT_s \= H, \qquad s=1,\ldots, k.
\end{equation}
Since $\ker(H)$ is invariant under all $T_s$ by irreducibility we get
that any such $H$ must be non-degenerate. This implies, in particular,
that $({T_s}^{*})^{-1}$ and $T_s$ are conjugate. Therefore the sets of
eigenvalues of $T_s$ are invariant under the map
$z\mapsto \bar z^{-1}$. This is certainly the case if the eigenvalues
are in the unit circle.

On the other hand, if  the eigenvalues of $T_s$
are invariant under the map $z\mapsto \bar z^{-1}$ for all $s$ 
then the $(T_s^*)^{-1}$ give another solution 
to~\eqref{prod-mat}. If our system is rigid then there 
exists $H$ satisfying~\eqref{herm-condition}. Up to
a possible scalar factor $H$ is a Hermitian form invariant 
under the geometric monodromy group.

The set $\TT^\irr$ has finitely many connected components. The
signature of $H$ is constant on these components as it is 
continuous with integer values. 
We may further break the symmetry and choose the 
exponents satisfying $\alpha_1<\alpha_2$ and
$\gamma_1<\gamma_2<\gamma_3<\gamma_4$.  Then there is a unique
connected component $\TT^\irr_+$ where $H$ is positive definite.

We can compute the invariant form explicitly starting 
from~\eqref{eq:goursat_form}. 
The equations~\eqref{qinf_abcd} imply in this case that
$A$, $D$, and $BC$ are real. After making a suitable 
conjugation for $(B,C)$, we may assume that $A,B,C,D$ are real 
numbers. The invariant Hermitian matrix is then
	\begin{equation}
    \label{eq:goursat_hermitian}
    H = (AD-BC)\begin{pmatrix}
    C(1-DE) & BCE & C(1-D) & BC \\
    BCE & B(1-AE) & BC & B(1-A) \\
    C(1-D) & BC & C(1-D) & BC \\
    BC & B(1-A) & BC & B(1-A)
    \end{pmatrix},
    \end{equation}
where $E=(A+D-1)/(AD-BC)$.
The determinant of $H$ is
    \[(BC)^2(AD-BC-A-D+1)^3(AD-BC)^3.\]

Using~\eqref{eq:goursat_hermitian} we can easily 
describe~$\TT^\irr_+$ in terms of the 
parameters $(A,D,t)$ where $t=BC$.
If we look at the connected components of the set 
$\RR^3\sm V'$, where $V'=\{(A,D,t)\;|\;t(t-AD)(t-(1-A)(1-D))=0\}$,
and compute the signature in each case, we find that $H$ is 
positive definite if and only if
	\begin{equation*}
	\begin{cases}
	0<A,D<1,\\
	0<BC<AD,\\
	0<BC<(1-A)(1-D).
	\end{cases}
	\end{equation*}
    	
To derive a criterion in terms of eigenvalues requires more
work, but can be done similarly. The final criterion is then 
the following. Let $\mathcal{I}_1$ be the open arc in $S^1$ 
with end points $0$, and $b^{-1}$ (any of the two 
possibilities), and let $\mathcal{I}_2$ be the arc with end 
points $(b^{-1}a_1^{-1},b^{-1}a_2^{-1})$, where among the two 
arcs we pick the one that contains the point $b^{-1}$.
\begin{proposition}
	\label{hermposdef}
	The invariant Hermitian form $H$ is definite if and only
	if for some labeling $c_1,\dots,c_4$ of the eigenvalues 
	of $T_{\infty}$ we have 
	\begin{align*}
	&\textrm{(i)}\quad
	c_1,c_2 \in \mathcal{I}_1,\quad c_3,c_4\not\in\mathcal{I}_1,\\
	&\textrm{(ii)}\quad
	c_1c_2, c_3c_4, c_1c_3, c_2c_4 \in \mathcal{I}_2,\quad c_1c_4,c_2c_3\not\in\mathcal{I}_2.
	\end{align*}
\end{proposition}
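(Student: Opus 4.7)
The plan is to translate the characterization of the positive-definite region already obtained in $(A,D,BC)$-coordinates — namely $0<A<1$, $0<D<1$, and $0<BC<\min(AD,(1-A)(1-D))$ — into conditions on the eigenvalues $c_1,\dots,c_4$ via the formulas~\eqref{qinf_abcd} and~\eqref{eq:detfactors}. The system of inequalities is equivalent to sign conditions on the seven real quantities $A$, $1-A$, $D$, $1-D$, $BC$, $AD-BC$, and $(1-A)(1-D)-BC=AD-BC-A-D+1$, each of which is, up to an explicit $c_i$-independent prefactor in $a_1,a_2,b$, a product of elementary factors of the form $1-z$ with $z$ a monomial in the eigenvalues on $S^1$.

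First I would read off the product formulas for $BC$, $AD-BC$, and $AD-BC-A-D+1$ directly from~\eqref{eq:detfactors}, and derive analogous factorizations for $A$, $1-A$, $D$, $1-D$ by substituting the explicit form of $q_\infty$ into~\eqref{qinf_abcd} and factoring. Using $\arg(1-e^{i\phi})\equiv(\phi-\pi)/2\pmod{\pi}$ for $\phi\in(0,2\pi)$ together with the determinant relation $a_1a_2b^2c_1\cdots c_4=1$, I would verify that each of the seven products has total argument congruent to $0$ modulo $\pi$, reconfirming in this form the reality claim made in the paper.

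The sign of each such real product is then determined by a combinatorial count of how many of the monomials $z$ lie in a prescribed arc. The three key translations I expect are the following. The sign of $AD-BC$ is governed by the positions of $bc_1,\dots,bc_4$ relative to $\{1,b\}$, equivalently of the $c_i$ relative to the endpoints $\{1,b^{-1}\}$ of $\mathcal{I}_1$; positivity forces an even number of the $c_i$ to lie in $\mathcal{I}_1$. The sign of $(1-A)(1-D)-BC$ is governed by the positions of $c_1,\dots,c_4$ relative to $1$ alone. The sign of $BC$ is governed by the positions of the six pairwise products $c_ic_j$ relative to the endpoints $\{(a_1b)^{-1},(a_2b)^{-1}\}$ of $\mathcal{I}_2$. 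Each positivity condition becomes a parity condition on the number of relevant quantities in the distinguished arc.

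Finally I would combine these with the sign conditions on $A,1-A,D,1-D$, which are locally constant on the strata of $\TT^\irr$ cut out by the other three signs and so can be pinned down by evaluation at a single convenient point of $\TT^\irr_+$ (for instance a known finite-monodromy example, or a degenerate boundary configuration). Matching the resulting arc-counting conditions against the convex region in $(A,D,BC)$-space picks out a single connected component, and the combinatorial pattern describing it is precisely the $2+2$ split of (i) together with the $4+2$ split of (ii); here the identity $(c_1c_2)(c_3c_4)=(c_1c_3)(c_2c_4)=(c_1c_4)(c_2c_3)=c_1c_2c_3c_4$ organizes the six products into three complementary pairs and forces $\{c_1c_4,c_2c_3\}$ to be the opposite of $\{c_1c_2,c_3c_4\}$. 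The hard part will be the careful bookkeeping of the complex prefactors in~\eqref{eq:detfactors} and~\eqref{qinf_abcd}, since it is exactly these arguments that single out the positive-definite component from the neighbouring components of mixed Hermitian signature and fix the orientation conventions on $\mathcal{I}_1$ and $\mathcal{I}_2$ stated in the proposition.
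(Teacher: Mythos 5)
The paper does not give a detailed proof of Proposition~\ref{hermposdef}: it exhibits the positive-definite region in the $(A,D,BC)$-coordinates and then says only that the eigenvalue criterion ``requires more work, but can be done similarly.'' Your plan is the natural way to fill this in, and several parts are sound: the products in~\eqref{eq:detfactors} do give clean factorizations of $BC$, $AD-BC$ and $AD-BC-A-D+1$ into factors $(1-z)$ with $z$ a monomial in the eigenvalues, the reality of these products follows from $\arg(1-e^{i\phi})\equiv (\phi-\pi)/2 \pmod \pi$ together with $a_1a_2b^2c_1\cdots c_4=1$, and the remark that $(c_1c_2)(c_3c_4)=(c_1c_3)(c_2c_4)=(c_1c_4)(c_2c_3)=c_1c_2c_3c_4$ equals the product of the endpoints of $\mathcal{I}_2$ is exactly why the six products $c_ic_j$ fall in or out of $\mathcal{I}_2$ in complementary pairs.

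However, there is a concrete gap in the first step: the claim that $A$, $1-A$, $D$, $1-D$ also factor, up to a $c_i$-independent prefactor, into products of elementary factors $(1-z)$ is false. From~\eqref{qinf_abcd}, $A$ is (up to prefactor) the difference $a_1^2 q_\infty(a_1^{-1})-b^2 q_\infty(b^{-1})$, i.e.\ a difference of two such products, which does not factor; the same holds for $D$, $1-A$, $1-D$. So the ``arc-counting determines the sign'' mechanism simply does not apply to these four quantities, and the first half of your argument cannot produce the needed sign constraints on $A$ and $D$. Your fallback — that the signs of $A,1-A,D,1-D$ are locally constant on the locus $\{BC>0,\;AD-BC>0,\;(1-A)(1-D)-BC>0\}$ (true, since on that locus none of $A=0,A=1,D=0,D=1$ can occur) and can therefore be pinned down by a sample evaluation — is the correct way to recover this information, but as stated it is underspecified: the three product signs alone only constrain the \emph{parities} of the arc counts, whereas conditions (i) and (ii) assert the exact counts ($2$ of the $c_i$ in $\mathcal{I}_1$, $4$ of the $c_ic_j$ in $\mathcal{I}_2$). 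You need the additional observation that these counts themselves, not merely the parities, are constant on connected components of $\TT^\irr$ because the endpoints of $\mathcal{I}_1$ and $\mathcal{I}_2$ are exactly the values excluded by the irreducibility conditions $q_\infty(1)q_\infty(b^{-1})w_2(q_\infty)(a_1^{-1}b^{-1})\ne 0$; with that in place, enumerating the finitely many count patterns and matching signatures at one point in each is a complete argument. As currently written, the proposal leans on a factorization of $A$, $D$ that does not exist and leaves the exact-count versus parity distinction implicit.
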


\subsection{Integrality}
\label{integrality}
We would like to give an integral form of our local monodromies. The
first observation is that we may choose $T_\infty$ as the companion
form of $q_\infty$ since it has no repeated roots. After some
experimentation we found we can choose $T_1$ as follows.
    \[T_1=
    \left(
    \begin{array}{rrrr}
   \sigma_1&      0&0&         \sigma_2^{-1}a_1^{-1}\\
   \sigma_2(a_1+1)   &\sigma_1&1&  \sigma_1\sigma_2^{-1}{a_1^{-1}}\\
    -\sigma_1\sigma_2a_1    &-\sigma_2&0&                   1 + a_1^{-1}\\
    -\sigma_2^2a_1&      0&0&                           0\\
    
    \end{array}
    \right),\quad
    T_\infty=\left(
    \begin{array}{rrrr}
        0&0&0&-\tau_{4}\\
        1&0&0&\tau_{3}\\
        0&1&0&-\tau_{2}\\
        0&0&1&\tau_{1}\\
    \end{array}
    \right),\]
where $\sigma_i=e_i(b_1,b_2), \tau_i=e_i(c_1,c_2,c_3,c_4)$ are the
elementary symmetric functions.

With these, using that
$$
a_1a_2\sigma_2^2\tau_4=1,
$$
obtained by taking determinants in $T_0T_1T_\infty=I_4$, we get
$$
  T_0=\left(
    \begin{array}{rrrr}
(a_1 + 1)& 0& -\sigma_2^{-1}& a_2(\sigma_1\tau_4-\tau_3)\\
-\sigma_1a_1& 1& \sigma_1\sigma_2^{-1}&
                                        a_2(\tau_2-\sigma_2\tau_4)-\sigma_2^{-1}\\ 
 \sigma_2a_1& 0& 0&-a_2\tau_1+\sigma_1\sigma_2^{-1}\\
0& 0& 0&a_2
    \end{array}
    \right).
$$

The trace field is generically given by
$K=\QQ(\sigma_1,\sigma_2,\tau_1,\ldots,\tau_4)$ and we see that we can
always take as field of definition the quadratic extension
$F:=K(a_1)$.  Note that we also have $\tr(T_0)=2+a_1+a_2\in K$.  Hence
$a_1$ and $a_2$ are conjugate over $K$.

In fact, the local monodromies are definable over the ring $R[a_1]$,
where
$R:=\ZZ[\sigma_1,\sigma_2,\tau_1,\ldots,
\tau_4,\sigma_2^{-1},\tau_4^{-1}]$ and hence the group $\Gamma$ they
generate as well. The traces of all elements of the geometric
monodromy group are in $R$.

In particular, in the main case
of interest the characteristic polynomials $q_0,q_1,q_\infty$ will
have only roots of unity as roots. In this case $K$ is a cyclotomic
field. We conclude that the geometric monodromy can be 
conjugated to lie in $\GL_4(\calO_F)$, where $\calO_F$ is the ring of
integers of $F=K(a_1)$. This is consistent with the rigid local system being
motivic. 

For example, consider
$q_0=(x-1)^2(x^2+1),q_1=(x^2-1)^2,q_\infty= x^4 + (\zeta_{12}^3 -
\zeta_{12})x^3 - \zeta_{12}x + 1$, where $\zeta_{12}$ is a primitive
$12$-root of unity. This corresponds to row~$\# 3$ in
Table~\ref{special-1/4}. Then our choice gives
$$
T_0:=\left(
\begin{array}{cccc}
\zeta_{12}^3 + 1 &0&1& \zeta_{12}^2 - 1 \\
 0&1&0& -\zeta_{12}^3
 + 1 \\
 -\zeta_{12}^3&0&0&\zeta_{12}^2\\\
 0&0&0&-\zeta_{12}^3
\end{array}
\right),
\quad
T_1:=\left(
\begin{array}{cccc}
 0&0&0&\zeta_{12}^3\\
  \zeta_{12}^3
 + 1 &0&1&0\\
 0&1&0& -\zeta_{12}^3
 + 1 \\
 -\zeta_{12}^3&0&0&0\\
 \end{array}
\right),
$$
and
$$
T_\infty:=\left(
\begin{array}{cccc}
 0&0&0&-1\\
 1&0&0&\zeta_{12}\\
 0&1&0&0\\
 0&0&1&-\zeta_{12}^3
 + \zeta_{12}\\
\end{array}
\right)
$$
These matrices generate a group $\Gamma$ of order $103680$, which is a
non-split central extension by $C_4$ of the simple group
$\Gamma_{25920}$. We see here a phenomenon that occurs frequently in
our examples. The quotient $\Gamma/Z(\Gamma)$ has no irreducible
representation of degree $4$ (the smallest non-trivial irreducible
representation is of order $5$), whereas a central extension, namely
$\Gamma$, does.

It follows from the above discussion that for G-II cases the cocycle
obstruction of~\S\ref{cocycle} is generically of order dividing
$2=[F:K]$. We can easily compute the corresponding matrix $X_\sigma$
for $\sigma$ the generator of $\Gal(F/K)$ as in~\S\ref{cocycle}. The
problem is linear: we solve $T_sX_\sigma=X_\sigma T_s^\sigma$
generically, where $\sigma(a_1)=a_2$.  We find
$$
X_\sigma X_\sigma^\sigma=\mu I_4,
$$
where
$\mu=-(a_1\sigma_2)^3w_2(q_\infty)\left(a_1^{-1}\sigma_2^{-1}\right)\in
K^\times$. Recall that $w_2(q_{\infty}):=\prod_{i<j}(T-c_ic_j)$.
The cocycle can be represented by a quaternion algebra. Explicitly,
this is the quaternion algebra $\left(\frac{D,\mu}K\right)$, where
$D=\disc(F)$ and $\mu$ is as above.

\section{Finite monodromy}
\label{goursat-pos-defn}
We would like to describe all cases of G-II with finite geometric
monodromy. Since the geometric monodromy is integral~\S\ref{integrality}
finite monodromy is equivalent to the invariant Hermitian form being
definite in every complex embedding of the field of definition. (This
is the same argument used in~\cite{Beukers-Heckman}.) These cases are
those where all solutions to the corresponding differential
equation~\S\ref{diff-eqn} are algebraic.

Apart from the infinitely many imprimitive cases discussed later
in~\S\ref{G-II-inf}, the only examples of irreducible cases with finite
monodromy that we obtained after an extensive search are those given
in Tables~\ref{special-1/3},~\ref{special-1/4},~\ref{special-1/5}
and~\ref{general} below.

\subsection{Description of the tables}
\label{descr-tables}
For each choice of eigenvalues we list the order of the geometric
monodromy $\Gamma\subseteq \GL_4(\CC)$, an identification of $A$ and
the quotient of $\Gamma/A$ using standard notation ($A$ denotes a
maximal abelian normal subgroup of $\Gamma$), the order of the center
of $\Gamma$ and whether $\Gamma$ acts primitively or not.

By a theorem of Jordan there are finitely many possibilities for the
quotient $\Gamma/A$. The finite groups acting in four dimensions were
classified by Blichfeldt (see~\cite{Hanany-He} for a modern
description). The group denoted by $\Gamma_{25920}$ is a simple
group.

We should note that we can always twist the local monodromies by
multiplying by scalars matrices so that the resulting triple is in
$\SL_4(\CC)$. If the group acts primitively, the normal subgroup $A$
consists of scalars. It follows that there are finitely many possible
primitive $\Gamma$ up to twisting; we will see in~\S\ref{inf-families}
that this is not the case for imprimitive groups.

\subsection{Special case}
We start by discussing a special, simpler case.
Assume that the characteristic polynomials $q_0,q_1,q_\infty$ of
the local monodromies at the respective singularities have real
coefficients and that $q_1=(T-1)^2(T+1)^2$.  Let $\gamma_i\in (0,1)$ for
$i=1,\ldots, 4$ be the exponents of the roots of $q_\infty$ and
similarly  let $\alpha_1 \in (0,1/2)$ be such that the exponents of
$q_0$ are $0,0,\alpha_1,1-\alpha_1$.

A special case of Proposition~\ref{hermposdef} reduces in this case to
the following.  Let $\delta_1,\ldots,\delta_6$ be representatives in
$(0,1)$ (with multiplicities) of the exponents $\gamma_i+\gamma_j$ for
$i<j$.  Define $n_1$ as the number of $\gamma_i$ in the interval
$(0,1/2)$ and $n_2$ the number of $\delta_i$ (counting with
multiplicities) in the interval $(1/2-\alpha_1,1/2+\alpha_1)$.

\begin{proposition}
\label{goursat-pos-defn-1}
  With the above assumptions and notations the invariant Hermitian
  form $H$ is definite if and only if $(n_1,n_2)=(2,4)$.
\end{proposition}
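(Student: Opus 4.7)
The plan is to derive the statement directly from Proposition~\ref{hermposdef} by specializing the arcs $\mathcal{I}_1,\mathcal{I}_2$ and translating conditions (i)--(ii) into the numerical counts $n_1$ and $n_2$. Since $q_1=(T-1)^2(T+1)^2$ we have $b=b^{-1}=-1$. Writing points of $S^1$ as $e^{2\pi i\theta}$, the arc $\mathcal{I}_1$ joining $1$ and $b^{-1}$ can be chosen to correspond to $\theta\in(0,1/2)$, and the arc $\mathcal{I}_2$ joining $b^{-1}a_1^{-1}$ and $b^{-1}a_2^{-1}$ through $b^{-1}$ corresponds to $\theta\in(1/2-\alpha_1,1/2+\alpha_1)$, using $\alpha_1\in(0,1/2)$. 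Hence $c_i\in\mathcal{I}_1$ iff $\gamma_i\in(0,1/2)$, while $c_ic_j\in\mathcal{I}_2$ iff the representative in $(0,1)$ of $\gamma_i+\gamma_j\pmod 1$ lies in $(1/2-\alpha_1,1/2+\alpha_1)$.

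Next I would exploit the real-coefficient hypothesis on $q_\infty$. Its four roots are distinct and lie on $S^1\setminus\{1\}$, and the only possible real roots on $S^1$ are $\pm 1$; a parity argument (a single real root $-1$ would force the cubic cofactor to have another root in $\{\pm 1\}$, which is excluded) shows that $q_\infty$ must have two pairs of complex conjugate roots. In particular $n_1=2$ is automatic. Labeling the two upper-half roots as $c_1,c_2$, I would then set $c_4=\overline{c_1}$ and $c_3=\overline{c_2}$; this yields $c_1c_4=c_2c_3=1$, which sits outside $\mathcal{I}_2$ as required by condition (ii).

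The core step is to show that under this labeling condition (ii) is equivalent to $n_2=4$. Complex conjugation pairs the remaining four products as $c_3c_4=\overline{c_1c_2}$ and $c_2c_4=\overline{c_1c_3}$, and the arc $\mathcal{I}_2$ is invariant under $\theta\mapsto 1-\theta$, so these four exponents lie in $\mathcal{I}_2$ in conjugate pairs. Consequently $n_2\in\{0,2,4\}$, with $n_2=4$ precisely when both $c_1c_2$ and $c_1c_3$ belong to $\mathcal{I}_2$, which is exactly condition (ii) for the chosen labeling. The delicate check is that the only alternative conjugate pairing, $c_4=\overline{c_2}$ and $c_3=\overline{c_1}$, forces $c_1c_3=c_2c_4=1\notin\mathcal{I}_2$, contradicting the requirement $c_1c_3\in\mathcal{I}_2$; so no other labeling can salvage (ii).

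Combining these observations, a labeling satisfying (i)--(ii) of Proposition~\ref{hermposdef} exists if and only if $(n_1,n_2)=(2,4)$, which is the claim. The main obstacle in writing this out cleanly is the bookkeeping of the six products $c_ic_j$ and the verification that the conjugation-forced labeling is the unique one producing the specific matching pattern $(c_1c_2,c_3c_4,c_1c_3,c_2c_4)$ versus $(c_1c_4,c_2c_3)$ appearing in (ii), rather than some permutation of it.
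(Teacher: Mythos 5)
Your argument is correct and follows precisely the route the paper indicates (the paper itself offers no detailed proof, only the remark that this is ``a special case of Proposition~\ref{hermposdef}''). You correctly specialize the arcs to $\mathcal{I}_1=(0,1/2)$ and $\mathcal{I}_2=(1/2-\alpha_1,1/2+\alpha_1)$ in $\theta$-coordinates, observe that real coefficients together with $\gamma_i\in(0,1)$, distinctness, and the even degree force $q_\infty$ to have two conjugate pairs (so $n_1=2$ is automatic), and then note that the conjugation-invariance of $\mathcal{I}_2$ pairs the six products $c_ic_j$ as $\{1,1\}$, $\{c_1c_2,\overline{c_1c_2}\}$, $\{c_1\overline{c_2},\overline{c_1\overline{c_2}}\}$, from which $n_2\in\{0,2,4\}$ and condition (ii) is equivalent to $n_2=4$; the remark that the only labeling consistent with $c_1c_4,c_2c_3\notin\mathcal{I}_2$ is the one placing the conjugate pairs on the $(1,4)$ and $(2,3)$ slots is the right bookkeeping point. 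One could streamline your parity argument slightly (a degree-$4$ real polynomial has an even number of real roots, the only candidate on $S^1\setminus\{1\}$ is $-1$, and one root is not even, so there are none), but the substance is correct.
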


To illustrate the situation here is a picture with the position of the
various roots on the unit circle in the case $\alpha_1,\alpha_2=1/3,2/3$ and
$\gamma=(1/28,9/28,3/4,25/28)$. Hence
$\delta=(1/14,3/14,5/14,9/14,11/14,13/14)$. 

\begin{center}
\begin{tikzpicture}
\draw (0,0) circle (2);
\draw[fill=white] (2,0) circle (.07);
\node at (1.3*2,0) {\small $0$};
\draw[fill=white] (-2,0) circle (.07);
\node at (-1.3*2,0) {$\tfrac12$};
\draw[fill=black]  (1.949, 0.445) circle (.07);
\node at (1.3*1.949,1.3*.445) {$\tfrac 1{28}$};
\draw[fill=black] (-0.867,1.801) circle (.07);
\node at (-1.3*.867,1.3*1.801) {$\tfrac9{28}$};
\draw[fill=black] (0,-2) circle (.07);
\node at (0,-1.3*2) {$\tfrac34$};
\draw[fill=black] (1.563,- 1.246) circle (.07);
\node at (1.3*1.563,-1.3*1.246) {$\tfrac{25}{28}$};
\node at (0,0) {$n_1=2$};
\end{tikzpicture}
\end{center}

\begin{center}
\begin{tikzpicture}
\draw (0,0) circle (2);
\draw[fill=white] (1,-2*0.866) circle (.07);
\node at (1.3,-1.3*2*0.866) {$\tfrac56$};
\draw[fill=white] (1,2*0.866) circle (.07);
\node at (1.3,1.3*2*0.866) {$\tfrac16$};

\draw[fill=black]  (1.801,0.867) circle (.07);
\node at (1.3*1.801,1.3*0.867) {$\tfrac 1{14}$};
\draw[fill=black] (0.445,1.949) circle (.07);
\node at (1.3*.445,1.3*1.949) {$\tfrac3{14}$};
\draw[fill=black] (-1.246,1.563) circle (.07);
\node at (-1.3*1.246,1.3*1.563) {$\tfrac5{14}$};
\draw[fill=black]  (1.801,-0.867) circle (.07);
\node at (1.3*1.801,-1.3*0.867) {$\tfrac {13}{14}$};
\draw[fill=black] (0.445,-1.949) circle (.07);
\node at (1.3*.445,-1.3*1.949) {$\tfrac{11}{14}$};
\draw[fill=black] (-1.246,-1.563) circle (.07);
\node at (-1.3*1.246,-1.3*1.563) {$\tfrac9{14}$};

\node at (0,0) {$n_2=4$};
\end{tikzpicture}
\end{center}

In the special case of this section the finite monodromy cases found
are listed in
Tables~\ref{special-1/3},\ref{special-1/4},\ref{special-1/5} below.

\begin{table}[H]
$$
\begin{array}{|c|c|c|c|c|c|c|}
\hline
& \gamma & |\Gamma|&\Gamma/A&A&|Z(\Gamma)|&\text{Impr}\\
\hline
1&1/8, 3/8, 5/8, 7/8 &  48&S_4&C_2&2&*\\
\hline
2&1/5, 2/5, 3/5, 4/5 &  60&A_5&1&1&\\
\hline
3&1/10,3/10,7/10,9/10& 120&A_5&C_2&2&\\
\hline
4&1/12,5/12,7/12,11/12  & 144&C_2\times A_4&C_6&2&*\\
\hline
5&1/20,9/20,13/20,17/20  & 240 &A_5&C_4&4&\\
\hline
6&2/9,1/3,5/9,8/9&324&A_4&C_3^3&1&*\\
\hline
7&1/24, 7/24, 17/24, 23/24&576&S_4\times A_4&C_2&2&\\
\hline
8&1/28,9/28,3/4,25/28&672&\PSL_2(\FF_7)&C_4&4&\\
\hline
9&1/20,9/20,11/20,19/20&720&C_2\times A_5&C_6&2&*\\
\hline
10 & 1/15, 4/15, 11/15, 14/15&1440&A_5\times A_4&C_2&2&\\
\hline
11&1/30, 11/30, 19/30, 29/30&1440&A_5\times A_4&C_2&2&\\
\hline
12&1/40,9/40,31/40,39/40&2880&A_5\times S_4&C_2&2&\\
\hline
\end{array}
$$
\caption{$\alpha_1, \alpha_2=1/3,2/3$}
\label{special-1/3}
\end{table}

\begin{table}[H]
$$
\begin{array}{|c|c|c|c|c|c|c|}
\hline
& \gamma &  |\Gamma|&\Gamma/A&A&|Z(\Gamma)|&\text{Impr}\\
\hline
1&1/12,5/12,7/12,11/12  & 192&C_2\times S_4&C_4&2&*\\
\hline
2&1/20,9/20,13/20,17/20  & 640 &C_2^4\rtimes D_5&C_4&4&\\
\hline
3&1/36,13/36,25/36,11/12&103680&\Gamma_{25920}&C_4&4&\\
\hline
\end{array}
$$
\caption{$\alpha_1, \alpha_2=1/4,3/4$}
\label{special-1/4}
\end{table}

\begin{table}[H]
$$
\begin{array}{|c|c|c|c|c|c|c|}
\hline
& \gamma & |\Gamma|&\Gamma/A&A&|Z(\Gamma)|&\text{Impr}\\
\hline
1&1/12,5/12,7/12,11/12 & 1200&C_2\times A_5&C_{10}&2&*\\
\hline
2&2/15,7/15,8/15,13/15  & 7200&A_5\times A_5&C_2&2&\\
\hline
3&1/20,9/20,11/20,19/20  & 1200 &C_2\times A_5&C_{10}&2&*\\
\hline
4&1/30,11/30,19/30,29/30&7200&A_5\times A_5&C_2&2&\\
\hline
\end{array}
$$
\caption{$\alpha_1, \alpha_2=1/5,4/5$}
\label{special-1/5}
\end{table}

\subsection{General case}

Here we consider the general case (up to twisting) where the exponents
are
$$
\begin{array}{c|c}
t&{\rm exponents}\\
\hline
0& 0,0,\alpha_1,\alpha_2\\
1& 0,0,\beta,\beta\\
\infty&\gamma_1,\gamma_2,\gamma_3,\gamma_4
\end{array}
$$
The finite monodromy cases found are listed in Table~\ref{general}.

\begin{table}[H]
\footnotesize
\renewcommand{\arraystretch}{1.2}
\[
\begin{array}{|c|c|c|c|c|c|c|c|}
\hline
& \beta,\alpha_1,\alpha_2& \gamma & |\Gamma|&\Gamma/A&A&|Z(\Gamma)|&\text{Impr}\\
\hline
1&1/2,1/2,1/3 & 1/8, 11/24, 5/8, 23/24 &4608&(A_4\times A_4)\rtimes C_2&C_4^2&4&*\\
\hline
2&1/2,1/2,1/3 & 5/48, 23/48, 29/48, 47/48 &41472&(A_4\times A_4)\rtimes D_4&C_6^2&6&*\\
\hline
3&1/2,1/2,1/3 & 11/120, 59/120, 71/120, 119/120 &1036800&(A_5\times A_5)\rtimes C_2&C_{12}^2&12&*\\
\hline
4&1/2,1/2,1/4 & 7/48, 23/48, 31/48, 47/48 &6144& C_2^4\rtimes D_6&C_4\cdot C_8&8&*\\
\hline
5&1/2,1/2,1/5 & 7/40, 19/40, 27/40, 39/40 &2880000&(A_5\times A_5)\rtimes C_2&C_{20}^2&20&*\\
\hline
6&1/2,1/2,1/5 & 19/120, 59/120, 79/120, 119/120 &2880000&(A_5\times A_5)\rtimes C_2&C_{20}^2&20&*\\
\hline
7&1/2,1/2,1/6 & 5/36, 17/36, 29/36, 11/12 &311040& \Gamma_{25920}&C_{12}&12& \\
\hline
8&1/2,1/2,1/6 & 11/60, 23/60, 47/60, 59/60 &311040& \Gamma_{25920}&C_{12}&12& \\
\hline
9&1/2,1/3,1/4 & 7/24, 5/12, 19/24, 11/12 &165888&(A_4\times A_4)\rtimes D_4&C_{12}^2&12&*\\
\hline
10&1/2,1/3,1/4 & 11/48, 23/48, 35/48, 47/48 &165888&(A_4\times A_4)\rtimes D_4&C_{12}^2&12&*\\
\hline
11&1/2,1/3,1/5 & 4/15, 7/15, 23/30, 29/30 &6480000&(A_5\times A_5)\rtimes C_2&C_{30}^2&30&*\\
\hline
12&1/2,1/3,1/5 & 17/60, 9/20, 47/60, 19/20 &6480000&(A_5\times A_5)\rtimes C_2&C_{30}^2&30&*\\
\hline
13&1/2,1/3,1/5 & 19/60, 5/12, 49/60, 11/12 &6480000&(A_5\times A_5)\rtimes C_2&C_{30}^2&30&*\\
\hline
14&1/2,1/3,1/5 & 29/120, 59/120, 89/120, 119/120 &6480000&(A_5\times A_5)\rtimes C_2&C_{30}^2&30&*\\
\hline
15&1/2,1/5,2/5 & 4/15, 13/30, 23/30, 14/15 &6000&S_5&C_2\cdot C_{5}^2&10&*\\
\hline
16&1/2,1/5,2/5 & 9/40, 19/40, 29/40, 39/40 &6000&S_5&C_2\cdot C_{5}^2&10&*\\
\hline
17&1/3,1/2,5/6 & 1/18, 7/18, 13/18, 5/6 &155520&\Gamma_{25920}&C_6&6&\\
\hline
18&1/3,1/2,1/6 & 5/18, 11/18, 5/6, 17/18 &155520&\Gamma_{25920}&C_6&6&\\
\hline
19&1/3,1/2,1/6 & 11/30, 17/30, 23/30, 29/30 &155520&\Gamma_{25920}&C_6&6&\\
\hline
20&1/3,1/3,2/3 & 1/12, 11/24, 5/6, 23/24 &69120&C_2^4.A_6&C_{12}&12& \\
\hline
21&1/3,1/3,2/3 & 2/15, 8/15, 11/15, 14/15 &2160&A_6&C_6&6& \\
\hline
22&1/3,1/3,2/3 & 5/24, 11/24, 17/24, 23/24 &2160&A_6&C_6&6& \\
\hline
23&1/3,1/3,2/3 & 5/42, 17/42, 5/6, 41/42 &15120&A_7&C_6&6& \\
\hline
24&1/3,1/3,2/3 & 11/60, 23/60, 47/60, 59/60 &69120& C_2^4.A_6&C_{12}&12& \\
\hline
\end{array}
\]
\caption{General case}
\label{general}
\end{table}

\section{Coxeter groups}
\label{coxeter}

We may start with a finite group in $\GL_4(\CC)$ and attempt to build
a G-II rigid local system by producing three appropriate elements
$T_0,T_1,T_\infty$.  For example, we can take a finite complex
reflection group $W$ in rank $4$, hence one of the Weyl groups
$A_4,B_4,F_4$ or the non-crystallographic case $H_4$.
Since $T_\infty$ should have distinct eigenvalues different from~$1$,
we could start by taking $T_\infty$ to be a Coxeter
element. Similarly, we can take $T_1$ to be the product of two
commuting reflections in $W$. We may assume that these reflections are
simple and hence correspond to two non-adjacent dots in the
corresponding Dynkin diagram.

 We illustrate the procedure in one example in the case $H_4$ and give
 in section~\S\ref{finite-ratnl} examples defined over $\QQ$.  The
 Dynkin diagram is 

\begin{center}
  \begin{tikzpicture}
    \draw (-3,0) node[anchor=east]  {$H_4:$};
    \draw[thick,fill=black] (-2 ,0) circle(.07);
    \draw[thick,fill=black] (-1 ,0) circle(.07);
    \draw[thick,fill=black] (0,0) circle(.07);
    \draw[thick,fill=black] (1,0) circle(.07);
    \draw[thick] (-2,0) -- node [above, fill=white]{$5$} (-1, 0);
    \draw[thick] (-1,0) -- (0,0);
    \draw[thick] (0, 0) -- (1, 0);

    \draw (-2,0) circle (.12);
    \draw (0,0) circle (.12);
  \end{tikzpicture}
\end{center}
where we have circled the two chosen simple reflections. We take
$T_\infty:=s_1s_2s_3s_4, T_1:=s_1s_3$ and $T_0$ so that
$T_0T_1T_\infty=1$. 

With the help of MAGMA (see the actual calculations below) we find
that
$$
 T_0=
\left(
\begin{array}{rrrr}
 1&\tau&\tau&0\\
 0&0 &-1&0\\
 0&0&1&1\\
 0 &-1 &-1 &-1\\
\end{array}
\right),\quad
T_1=
\left(
\begin{array}{rrrr}
-1&0&0&0\\
 \tau&1&1&0\\
 0&0& -1&0\\
 0&0&1&1\\
\end{array}
\right),\quad
T_\infty=
\left(\begin{array}{rrrr}
-1&-\tau&-\tau&-\tau\\
\tau&\tau&\tau&\tau\\
0&1&0&0\\
0&0&1& 0\\
\end{array}\right),
$$
where $\tau^2-\tau-1=0$.

Taking the embedding into $\RR$ where $\tau=(1+\sqrt
5)/2=1.618033988\cdots$ is the golden ratio, the exponents of the
local monodromies are: 
	$$
	T_0: (0, 0, 1/3, 2/3), \quad T_1: (0, 0, 1/2, 1/2) ,\quad T_\infty:
	(1/30, 11/30, 19/30, 29/30)
	$$
and hence this example corresponds to row~$\# 11$ in the above table for
$\alpha_1, \alpha_2=1/3,2/3$. This shows, in particular, that here the monodromy
representation can be realized over $\ZZ[\tau]$, the ring of integers
of the trace field $K:=\QQ(\sqrt5)$. This is consistent with our
discussion in~\S\ref{integrality}.

\begin{verbatim}
> W<s1,s2,s3,s4>:=CoxeterGroup(GrpMat,"H4");
> K<a>:=BaseRing(W);
> K;
Number Field with defining polynomial x^2 - x - 1 over the Rational Field
> R<x>:=PolynomialRing(K);
> Tinf:=s1*s2*s3*s4;
> T1:=s1*s3;
> T0:=Tinf^(-1)*T1^(-1);
> CharacteristicPolynomial(Tinf);
x^4 + (-a + 1)*x^3 + (-a + 1)*x^2 + (-a + 1)*x + 1
> CharacteristicPolynomial(T1);
x^4 - 2*x^2 + 1
> CharacteristicPolynomial(T0);
x^4 - x^3 - x + 1
> T0;
[ 1  a  a  0]
[ 0  0 -1  0]
[ 0  0  1  1]
[ 0 -1 -1 -1]
> T1;
[-1  0  0  0]
[ a  1  1  0]
[ 0  0 -1  0]
[ 0  0  1  1]
> Tinf;
[-1 -a -a -a]
[ a  a  a  a]
[ 0  1  0  0]
[ 0  0  1  0]
> G:=sub<W|[T1,Tinf]>; 
> #G;
1440
\end{verbatim}
\section{Differential equation}
\label{diff-eqn}

Goursat~\cite[\S 10]{Goursat} computes explicitly an order four linear
differential equation of type G-II with given local
monodromies. Let the exponents of these local monodromies be
$$
0: (0,1,1-\alpha_1,1-\alpha_2), \quad
1: (0,1,\beta,\beta+1), \quad
\infty: (\gamma_1,\gamma_2,\gamma_3,\gamma_4).
$$
These are  are related
by the equation 
$$
\beta=\tfrac12(1+e_1(\alpha)-e_1(\gamma)), \qquad
\alpha:=(\alpha_1,\alpha_2),\qquad \gamma:=(\gamma_1,\ldots,\gamma_4),
$$
where $e_n(x)$ denotes the elementary symmetric functions of
the quantities $x=(x_1,x_2,\ldots)$. 

The shape of the exponents forces the differential equation to be of
the following form
$$
 x^2(x-1)^2\frac{d^4}{dx^4}
        +(Ax-B)x(x-1)\frac{d^3}{dx^3}
	+(Cx^2-Dx+E)\frac{d^2}{dx^2}
	+(Fx-G)\frac d{dx}
        +H=0.
$$
for certain constants $A,B,\ldots,H$. Imposing further that there are
no logarithmic solutions at $x=1$ completely determines these
constants. Following Goursat we obtain the following values (the
expression for $G$ he gives is not quite right and is here corrected).
\begin{align*}
A&=6+e_1(\gamma)\\
B&=3+e_1(\alpha)\\
C&=7+3e_1(\gamma)+e_2(\gamma)\\
D&=E+C-(\beta-1)(\beta-2)\\
E&=1+e_1(\alpha)+e_2(\alpha)\\
F&=1+e_1(\gamma)+e_2(\gamma)+e_3(\gamma)\\
G&=F+2(\beta-1)(\beta-2)(\beta-3)+(\beta-1)(\beta-2)(2A-B)
                +(\beta-1)(2C-D)
\\
H&=e_4(\gamma).
\end{align*}

As worked out also by Goursat~\cite[(18)]{Goursat} the 
coefficients $a_n$ of a power series solution $\sum_{n\geq 0}a_nx^n$ 
to the differential equation satisfy the second order recursion
\begin{equation}
\label{recursion}
C_0a_n=C_1a_{n-1}+C_2a_{n-2},
\end{equation}
where
\begin{align*}
C_0&:=(n+1)(n+2)[n(n-1)+Bn+E]\\
C_1&:=(n+1)[2n(n-1)(n-2) +(A+B)n(n-1) +Dn+G]\\
C_2&:= -[n(n-1)(n-2)(n-3) +An(n-1)(n-2) +Cn(n-1) +Fn +H]
\end{align*}

For example, if we take $\alpha=(1/4,3/4)$ and
$\gamma=(1/5,2/5,3/5,-1/5)$ (so that $\beta=1/2$) we obtain the
differential equation
\begin{multline}
x^2(x-1)^2\frac{d^4}{dx^4}
+x(x-1) (7x- 4)\frac{d^3}{dx^3}\\
+(51/5x^2 - 931/80x + 35/16)\frac{d^2}{dx^2}
+(54/25x - 1223/800)\frac{d}{dx}
 -6/625,
\end{multline}
which indeed has  local exponents
$$
0: (0,1,1/4,3/4), \quad
1: (0, 1/2, 1, 3/2), \quad
\infty: (-1/5, 1/5, 2/5, 3/5).
$$
The first few coefficients of the power series expansion  of a
basis of holomorphic solutions to the differential equation at $x=0$
are as follows
\begin{align}
\begin{split}
\phi_0&=1 + 48/21875x^2 + 28088/18046875x^3 + 6589643/5865234375x^4\\
& +57582020413/67659667968750x^5 + O(x^6)\\
\phi_1&= x + 1223/3500x^2 +
1096811/5775000x^3 + 370276451/3003000000x^4\\
& + 15278570717561/173208750000000x^5 +
O(x^6)
\end{split}
\end{align}

As expected from a motivic situation the denominators of the
coefficients appear to grow only exponentially, rather than what could
be expected generically from solutions to the
recursion~\eqref{recursion}. 

\section{Field of moduli $\QQ$}

It is easy to list all cases of irreducible G-II rigid local systems
with field of moduli $\QQ$ as there are only finitely many cyclotomic
polynomials of fixed degree and with coefficients in $\QQ$. The
resulting cases are listed in Table~\ref{ratnl-indef} and
Table~\ref{ratnl-defn} according to the signature of the respective
Hermitian form.

\begin{table}
$$
\begin{array}{|r|r|r|c|r|}
\hline
&\alpha_1,\alpha_2&\beta_1,\beta_2&\gamma_1,\gamma_2,\gamma_3,\gamma_4&\mu\\
\hline
1&1/3, 2/3 & 0, 1/2 & 1/4, 1/3, 2/3, 3/4 & -2\\
\hline
2&1/3, 2/3 & 0, 1/2 & 1/6, 1/4, 3/4, 5/6 & -2\\
\hline
3&1/3, 2/3 & 1/3, 2/3 & 1/6, 1/4, 3/4, 5/6 & -2\\
\hline
4&1/3, 2/3 & 1/3, 2/3 & 1/10, 3/10, 7/10, 9/10 & -1\\
\hline
5&1/3, 2/3 & 1/6, 5/6 & 1/4, 1/3, 2/3, 3/4 & -2\\
\hline
6&1/3, 2/3 & 1/6, 5/6 & 1/5, 2/5, 3/5, 4/5 & -1\\
\hline
7&1/4, 3/4 & 0, 1/2 & 1/4, 1/3, 2/3, 3/4 & -3\\
\hline
8&1/4, 3/4 & 0, 1/2 & 1/6, 1/3, 2/3, 5/6 & -1\\
\hline
9&1/4, 3/4 & 0, 1/2 & 1/6, 1/4, 3/4, 5/6 & -3\\
\hline
10&1/4, 3/4 & 0, 1/2 & 1/5, 2/5, 3/5, 4/5 & -1\\
\hline
11&1/4, 3/4 & 0, 1/2 & 1/10, 3/10, 7/10, 9/10 & -1\\
\hline
12&1/4, 3/4 & 1/3, 2/3 & 1/6, 1/4, 3/4, 5/6 & -3\\
\hline
13&1/4, 3/4 & 1/3, 2/3 & 1/10, 3/10, 7/10, 9/10 & -1\\
\hline
14&1/4, 3/4 & 1/3, 2/3 & 1/12, 5/12, 7/12, 11/12 & 1\\
\hline
15&1/4, 3/4 & 1/4, 3/4 & 1/12, 5/12, 7/12, 11/12 & 1\\
\hline
16&1/4, 3/4 & 1/6, 5/6 & 1/4, 1/3, 2/3, 3/4 & -3\\
\hline
17&1/4, 3/4 & 1/6, 5/6 & 1/5, 2/5, 3/5, 4/5 & -1\\
\hline
18&1/4, 3/4 & 1/6, 5/6 & 1/12, 5/12, 7/12, 11/12 & 1\\
\hline
19&1/6, 5/6 & 0, 1/2 & 1/4, 1/3, 2/3, 3/4 & -2\\
\hline
20&1/6, 5/6 & 0, 1/2 & 1/6, 1/3, 2/3, 5/6 & -2\\
\hline
21&1/6, 5/6 & 0, 1/2 & 1/6, 1/4, 3/4, 5/6 & -2\\
\hline
22&1/6, 5/6 & 0, 1/2 & 1/5, 2/5, 3/5, 4/5 & -1\\
\hline
23&1/6, 5/6 & 0, 1/2 & 1/8, 3/8, 5/8, 7/8 & -1\\
\hline
24&1/6, 5/6 & 0, 1/2 & 1/10, 3/10, 7/10, 9/10 & -1\\
\hline
25&1/6, 5/6 & 1/3, 2/3 & 1/6, 1/4, 3/4, 5/6 & -2\\
\hline
26&1/6, 5/6 & 1/3, 2/3 & 1/5, 2/5, 3/5, 4/5 & 1\\
\hline
27&1/6, 5/6 & 1/3, 2/3 & 1/8, 3/8, 5/8, 7/8 & 1\\
\hline
28&1/6, 5/6 & 1/3, 2/3 & 1/10, 3/10, 7/10, 9/10 & 1\\
\hline
29&1/6, 5/6 & 1/3, 2/3 & 1/12, 5/12, 7/12, 11/12 & 2\\
\hline
30&1/6, 5/6 & 1/4, 3/4 & 1/5, 2/5, 3/5, 4/5 & 1\\
\hline
31&1/6, 5/6 & 1/4, 3/4 & 1/8, 3/8, 5/8, 7/8 & 1\\
\hline
32&1/6, 5/6 & 1/4, 3/4 & 1/10, 3/10, 7/10, 9/10 & 1\\
\hline
33&1/6, 5/6 & 1/4, 3/4 & 1/12, 5/12, 7/12, 11/12 & 2\\
\hline
34&1/6, 5/6 & 1/6, 5/6 & 1/4, 1/3, 2/3, 3/4 & -2\\
\hline
35&1/6, 5/6 & 1/6, 5/6 & 1/5, 2/5, 3/5, 4/5 & 1\\
\hline
36&1/6, 5/6 & 1/6, 5/6 & 1/8, 3/8, 5/8, 7/8 & 1\\
\hline
37&1/6, 5/6 & 1/6, 5/6 & 1/10, 3/10, 7/10, 9/10 & 1\\
\hline
38&1/6, 5/6 & 1/6, 5/6 & 1/12, 5/12, 7/12, 11/12 & 2\\
\hline
\end{array}
$$
\caption{Signature $(2,2)$}
\label{ratnl-indef}
\end{table}

\begin{table}
$$
\begin{array}{|r|r|r|c|r|}
\hline
&\alpha_1,\alpha_2&\beta_1,\beta_2&\gamma_1,\gamma_2,\gamma_3,\gamma_4&\mu\\
\hline
1&1/3, 2/3 & 0, 1/2 & 1/5, 2/5, 3/5, 4/5 &1\\
\hline
2&1/3, 2/3 & 0, 1/2 & 1/8, 3/8, 5/8, 7/8 &1\\
\hline
3&1/3, 2/3 & 0, 1/2 & 1/10, 3/10, 7/10, 9/10 &1\\
\hline
4&1/3, 2/3 & 0, 1/2 & 1/12, 5/12, 7/12, 11/12 &2\\
\hline
5&1/3, 2/3 & 1/3, 2/3 & 1/5, 2/5, 3/5, 4/5 &-1\\
\hline
6&1/3, 2/3 & 1/3, 2/3 & 1/8, 3/8, 5/8, 7/8 &-1\\
\hline
7&1/3, 2/3 & 1/4, 3/4 & 1/6, 1/3, 2/3, 5/6 &-2\\
\hline
8&1/3, 2/3 & 1/4, 3/4 & 1/5, 2/5, 3/5, 4/5 &-1\\
\hline
9&1/3, 2/3 & 1/4, 3/4 & 1/8, 3/8, 5/8, 7/8 &-1\\
\hline
10&1/3, 2/3 & 1/4, 3/4 & 1/10, 3/10, 7/10, 9/10 &-1\\
\hline
11&1/3, 2/3 & 1/6, 5/6 & 1/8, 3/8, 5/8, 7/8 &-1\\
\hline
12&1/3, 2/3 & 1/6, 5/6 & 1/10, 3/10, 7/10, 9/10 &-1\\
\hline
13&1/4, 3/4 & 0, 1/2 & 1/12, 5/12, 7/12, 11/12 &1\\
\hline
14&1/4, 3/4 & 1/3, 2/3 & 1/5, 2/5, 3/5, 4/5 &-1\\
\hline
15&1/4, 3/4 & 1/4, 3/4 & 1/6, 1/3, 2/3, 5/6 &-1\\
\hline
16&1/4, 3/4 & 1/4, 3/4 & 1/5, 2/5, 3/5, 4/5 &-1\\
\hline
17&1/4, 3/4 & 1/4, 3/4 & 1/10, 3/10, 7/10, 9/10 &-1\\
\hline
18&1/4, 3/4 & 1/6, 5/6 & 1/10, 3/10, 7/10, 9/10 &-1\\
\hline
\end{array}
$$
\caption{Signature $(4,0)$}
\label{ratnl-defn}
\end{table}

There are only four different quaternion algebras over $\QQ$ that
appear depending on $\alpha_1,\alpha_2$ and $\mu$.  To give these
algebras is enough to give the list $[p_1,\ldots,p_{2r}]$ of ramified
primes. These are: $[\ ]$ (i.e. the matrix algebra),
$[2,\infty],[3,\infty]$ or $[2,3]$.  We tabulate what we get below in
Table~\ref{quaternion-table} where $D=\disc(F)$. When $\mu=1$ the
obstruction is trivial.

\begin{table}
$$
\begin{array}{|r|r|r|r|r|}
\hline
D\backslash \mu & -3 & -2&-1& 2\\
\hline
-3& [3,\infty]&[2,\infty]&[3,\infty]&[2,3]\\
\hline
-4& [3,\infty]&[2,\infty]&[2,\infty]&\\
\hline
\end{array}
$$
\caption{Quaternion algebras}
\label{quaternion-table}
\end{table}

\section{Finite monodromy  $K=\QQ$}
\label{finite-ratnl}
As we see in the above table there are only four cases. Three cases
are actually definable over $\QQ$; we list these first. We give the
fourth case in ~\S\ref{finite-extra}; it has the quaternion algebra
ramified at $[2,3]$ as an obstruction and is hence not definable over
$\QQ$.

We will construct these examples in Coxeter groups as
in~\S\ref{coxeter}; we circle in the corresponding Dynkin diagram the two
chosen simple reflections.

\subsection{$(1/3,2/3),(0,1/2),(1/5,2/5,3/5,4/5)$}

We can find this case as a subgroup of $S_5$ viewed as the Coxeter
group of the root system~$A_4$. 

\begin{center}
  \begin{tikzpicture}
    \draw (-3,0) node[anchor=east]  {$A_4:$};
    \draw[thick,fill=black] (-2 ,0) circle(.07);
    \draw[thick,fill=black] (-1 ,0) circle(.07);
    \draw[thick,fill=black] (0,0) circle(.07);
    \draw[thick,fill=black] (1,0) circle(.07);
    \draw[thick] (-2,0) --  (-1, 0);
    \draw[thick] (-1,0) -- (0,0);
    \draw[thick] (0, 0) -- (1, 0);

    \draw (-2,0) circle (.12);
    \draw (0,0) circle (.12);
  \end{tikzpicture}
\end{center}

The geometric monodromy group is
isomorphic to $A_5$, the alternating group in five letters, acting in
its standard representation. Here is a
calculation using MAGMA.
\begin{verbatim}
> W<s1,s2,s3,s4>:=CoxeterGroup(GrpMat,"A4");  
> K:=BaseRing(W);
> R<x>:=PolynomialRing(K);
> Tinf:=s1*s2*s3*s4;
> T1:=s1*s3;
> T0:=Tinf^(-1)*T1^(-1);
> CharacteristicPolynomial(T0);
x^4 - x^3 - x + 1
> CharacteristicPolynomial(T1);
x^4 - 2*x^2 + 1
> CharacteristicPolynomial(Tinf);
x^4 + x^3 + x^2 + x + 1
> G:=sub<W|[T0,T1,Tinf]>;             
> IsIsomorphic(G,AlternatingGroup(5));
true Homomorphism of MatrixGroup(4, Rational Field) of order 2^2 * 3 * 5 into 
GrpPerm: $, Degree 5, Order 2^2 * 3 * 5 induced by
    [ 1  1  1  0]
    [ 0  0 -1  0]
    [ 0  0  1  1]
    [ 0 -1 -1 -1] |--> (2, 5, 3)
    [-1  0  0  0]
    [ 1  1  1  0]
    [ 0  0 -1  0]
    [ 0  0  1  1] |--> (1, 2)(4, 5)
    [-1 -1 -1 -1]
    [ 1  0  0  0]
    [ 0  1  0  0]
    [ 0  0  1  0] |--> (1, 3, 5, 4, 2)
\end{verbatim}
Choosing the parameters in Goursat's differential equation~\S
\ref{diff-eqn} as 
$$
(\alpha_1, \alpha_2)=(1/3,2/3);\quad
\beta=1/2;\quad
(\gamma_1,\gamma_2,\gamma_3,\gamma_4)=(1/5,2/5,-2/5,4/5)
$$
we obtain $7, 4, 10, 413/36, 20/9, 46/25,
2387/1800,-16/625$ for the eight constants
$A,B,\ldots,H$. Then all holomorphic solutions at
$x=0$ have power series expansion, since they represent algebraic
functions of
$x$, with integral coefficients, up to the power of some constant
$N$. (The minimal such
$N$ is called the Eisenstein constant of the algebraic function; in
this example it seems to only involve the primes $2,5$ and $11$.)

The holomorphic solution to equation holomorphic at $x=0$ and
starting as 
$$
y:=1 - 387/1300\,x - 172773/2080000\,x^2 - 141382989/3328000000\,x^3 +O(x^4)
$$
satisfies an algebraic equation of degree $10$ over
$\QQ(x)$.  (The series can be computed using the explicit form of the
differential equation given by Goursat, see~\S\ref{diff-eqn}.)

The solution over $K:=\QQ(\sqrt{-15})$ that starts
as
$$
 1 - (123/475 + 33/1900\, \omega )x - (271713/3800000 +
 78771/15200000\, \omega )x^2 +O(x^3)
$$
where $\, \omega ^2-\, \omega +4=0$ is a generator of the ring of
integers of $K$ on the other hand, satisfies the following degree five
equation
$$
P(x,y):=y^5+a_3(x)y^3+a_2(x)y^2+a_1(x)y+a_0(x)=0,
$$
where
\begin{align*}
\begin{split}
a_3&:=605/8664 - 715/2888\, \omega \\
a_2&:=-1189825/2963088 +
70525/329232\, \omega \\
a_1&:=(298150/390963 - 11050/130321\, \omega )x 
-518989705/900778752 + 19234735/300259584\, \omega \\
a_0&:= -(453252/2476099 +
151020/2476099\, \omega )x^2
+ (3663787/14856594 + 406915/4952198\, \omega )x\\
&-(82982887/900778752 + 9216415/300259584\, \omega )
\end{split}
\end{align*}

\subsection{$(1/4,3/4),(0,1/2),(1/8,3/8,5/8,7/8)$}
We can find this case as a subgroup of the Coxeter group of the root
system~$B_4$.

\begin{center}
  \begin{tikzpicture}
    \draw (-3,0) node[anchor=east]  {$B_4:$};
    \draw[thick,fill=black] (-2 ,0) circle(.07);
    \draw[thick,fill=black] (-1 ,0) circle(.07);
    \draw[thick,fill=black] (0,0) circle(.07);
    \draw[thick,fill=black] (1,0) circle(.07);
    \draw[thick] (-2,0) -- node [above, fill=white]{$4$} (-1, 0);
    \draw[thick] (-1,0) -- (0,0);
    \draw[thick] (0, 0) -- (1, 0);

    \draw (-1,0) circle (.12);
    \draw (1,0) circle (.12);
  \end{tikzpicture}
\end{center}

 The geometric monodromy group is isomorphic to
$\GL_2(\FF_3)$, of order $48$, in its unique faithful irreducible
representation of dimension four. Here is a calculation using MAGMA.
\begin{verbatim}
> W<s1,s2,s3,s4>:=CoxeterGroup(GrpMat,"B4");
> K:=BaseRing(W);
> R<x>:=PolynomialRing(K);
> Tinf:=s1*s2*s3*s4;
> T1:=s2*s4;
> T0:=Tinf^(-1)*T1^(-1);
> CharacteristicPolynomial(T0);
x^4 - x^3 - x + 1
> CharacteristicPolynomial(T1);
x^4 - 2*x^2 + 1
> CharacteristicPolynomial(Tinf);
x^4 + 1
> T0;
[ 0 -1  0  0]
[ 0  1  1  2]
[ 1  1  1  0]
[-1 -1 -1 -1]
> T1;
[ 1  1  0  0]
[ 0 -1  0  0]
[ 0  1  1  2]
[ 0  0  0 -1]
> Tinf;
[-1 -1 -1 -2]
[ 1  0  0  0]
[ 0  1  0  0]
[ 0  0  1  1]
> G:=sub<W|[T0,T1,Tinf]>;
> IsIsomorphic(G,GL(2,GF(3)));
true Mapping from: GrpMat: G to GL(2, GF(3))
Composition of Mapping from: GrpMat: G to GrpPC and
Mapping from: GrpPC to GrpPC and
Mapping from: GrpPC to GL(2, GF(3))
\end{verbatim}

We can change basis so that $T_\infty$ is the companion matrix of
$\Phi_8=T^4+1$. We obtain  the following triple
$$
T_0=
\left(
\begin{array}{rrrr}
 0&1&1&0\\
 -1&0&-1&0\\
 0&-1&0&0\\
 0&1&0&1\\
\end{array}
\right),
\quad
T_1=
\left(
\begin{array}{rrrr}
 -1&-1&-1&-1\\
 0&0&0&-1\\
 0&1&0&1\\
 -1&0&0&1\\
\end{array}
\right),
\quad
T_\infty=
\left(
\begin{array}{rrrr}
 0&0&0&-1\\
 1&0&0&0\\
 0&1&0&1\\
 0&0&1&0\\
\end{array}
\right)
$$
Choosing the parameters in Goursat's differential equation below~\S
\ref{diff-eqn} as 
$$
(\alpha_1, \alpha_2)=(1/4,3/4);\quad
\beta=1/2;\quad
(\gamma_1,\gamma_2,\gamma_3,\gamma_4)=(1/8,3/8,-3/8,7/8)
$$
we obtain $7, 4, 319/32, 3295/288, 20/9, 117/64, 383/288, -63/4096$
for the eight constants $A,B,\ldots,H$. Then all holomomorphic
solutions at $x=0$ have power series expansion with integral
coefficients up to powers of $2$ and $5$.

The holomorphic solution to equation holomorphic at $x=0$ and
starting as 
$$
1 + (5/256\sqrt{-8} - 29/128)x + (383/65536\sqrt{-8} - 527/8192)x^2 + O(x^3)
$$
satisfies the degree eight equation
$$
P(x,y)=y^8+a_6(x)y^6+a_4(x)y^4+a_2(x)y^2+a_0(x),
$$
where
\begin{align*}
\begin{split}
a_6&:=230/729\sqrt{-8} - 400/729\\
a_4&:= (1048/19683\sqrt{-8} + 19984/19683)x -(351670/1594323\sqrt{-8} +
1034482/1594323)\\ 
a_2&:= (4842880/43046721\sqrt{-8} -
10078688/43046721)x\\
& + (-1015591450/10460353203\sqrt{-8} +
1684358888/10460353203)\\
a_0&:=-(27028768/1162261467\sqrt{-8} + 3467632/1162261467)x^3 \\
&+
(172219360/3486784401\sqrt{-8} + 238769752/3486784401)x^2\\
& -(296048878/10460353203\sqrt{-8} + 1067187679/10460353203)x\\
&+
(22649710/10460353203\sqrt{-8} + 382087111/10460353203)
\end{split}
\end{align*}

In addition, let $\phi_0:=1+0\cdot x +O(x^2),\phi_1:=0\cdot 1 +x
+O(x^2)$ be a basis of the holomorphic solutions to the differential
equation at $x=0$ and define
$$
\psi:=\phi_0^2-891/16384\phi_1^2.
$$
Then $\psi$ satisfies a degree four equation; more precisely, if
$\xi$ is the hypergeometric series satisfying the trinomial
equation
$$
\xi^4-4\xi^3+27x=0
$$
then $\psi=-4/135\xi^3+4/45\xi^2+32/45\xi-37/27$.

\subsection{$(1/4,3/4),(0,1/2),(1/12,5/12,7/12,11/12)$}
We can find this case as a subgroup of the Coxeter group of the root
system~$F_4$. 

\begin{center}
  \begin{tikzpicture}
    \draw (-3,0) node[anchor=east]  {$F_4:$};
    \draw[thick,fill=black] (-2 ,0) circle(.07);
    \draw[thick,fill=black] (-1 ,0) circle(.07);
    \draw[thick,fill=black] (0,0) circle(.07);
    \draw[thick,fill=black] (1,0) circle(.07);
    \draw[thick] (-2,0) -- (-1, 0);
    \draw[thick] (-1,0) -- node [above, fill=white]{$4$} (0,0);
    \draw[thick] (0, 0) -- (1, 0);

    \draw (-2,0) circle (.12);
    \draw (0,0) circle (.12);
  \end{tikzpicture}
\end{center}

The geometric monodromy group is isomorphic to
$U_2(\FF_3)\rtimes C_2$, of order $192$, in one of its irreducible
faithful representations of dimension four.  This group is labeled
$(192,988)$ in the {\tt SmallGroup} database.  Here is a calculation
using MAGMA.
\begin{verbatim}
> W<s1,s2,s3,s4>:=CoxeterGroup(GrpMat,"F4");
> K:=BaseRing(W);
> R<x>:=PolynomialRing(K);
> Tinf:=s1*s2*s3*s4;
> T1:=s1*s3;                                 
> T0:=Tinf^(-1)*T1^(-1);                     
> CharacteristicPolynomial(T0);
x^4 - 2*x^3 + 2*x^2 - 2*x + 1
> CharacteristicPolynomial(T1);
x^4 - 2*x^2 + 1
> CharacteristicPolynomial(Tinf);
x^4 - x^2 + 1
> T0;
[ 1  1  2  0]
[ 0  1  0  0]
[ 0  0  1  1]
[ 0 -1 -2 -1]
> T1;
[-1  0  0  0]
[ 1  1  2  0]
[ 0  0 -1  0]
[ 0  0  1  1]
> Tinf;
[-1 -1 -2 -2]
[ 1  0  0  0]
[ 0  1  1  1]
[ 0  0  1  0]
> G:=sub<W|[T0,T1,Tinf]>;                    
> IdentifyGroup(G);
<192, 988>
\end{verbatim}
Conjugating so that $T_\infty$ is the companion matrix of $\Phi_{12}=T^4-T^2+1$
we obtain the following triple
$$
T_0=\left(\begin{array}{rrrr}
 1&0&0&-1\\
 0&1&0&1\\
 0&0&1&1\\
 0&-1&-1&-1\\
\end{array}
\right),
\quad
T_1=\left(
\begin{array}{rrrr}
 0&1&1&1\\
 1&0&0&-1\\
 0&0&-1&0\\
 0&0&1&1\\
\end{array}
\right),
\quad
T_\infty=\left(
\begin{array}{rrrr}
 0&0&0&-1\\
 1&0&0&0\\
 0&1&0&1\\
 0&0&1&0\\
\end{array}
\right)
$$
The permutation representation of smallest degree for this monodromy
group is of degree~$24$. Hence some solution to the differential
equation satisfies a degree $24$ equation with coefficients in  some
number field but we have not attempted to find it.

\subsection{$(1/3,2/3),(0,1/2),(1/12,5/12,7/12,11/12)$}
\label{finite-extra}
As already mentioned in this case the quaternion algebra is ramified
at $[2,3]$. As it happens, since $2$ and $3$ are both inert in
$F=\QQ(\sqrt 5)$, the obstruction cocycle $\xi$ becomes trivial in
$F$. Therefore by Proposition~\ref{cocycle} we should be able to
realize this case over $F$.

This indeed the case and we can realize it again using Coxeter groups;
namely, as a subgroup of the non-crystallographic $W(H_4)$ of order
$14400$.  Note, however, that in this case $T_\infty$ is not a Coxeter
element; we still take $T_1$ as a product of two commuting simple
reflections.

Here are some of the computations in MAGMA.
\begin{verbatim}
> W<s1,s2,s3,s4>:=CoxeterGroup(GrpMat,"H4");
> K<a>:=BaseRing(W);
> K;
Number Field with defining polynomial x^2 - x - 1 over the Rational Field
> R<x>:=PolynomialRing(K);
> CC:=ConjugacyClasses(W);
> [Order(g[3]): g in CC];
[ 1, 2, 2, 2, 2, 3, 3, 4, 4, 5, 5, 5, 5, 5, 6, 6, 6, 6, 10, 10, 10, 10, 10, 10, 
10, 10, 10, 12, 15, 15, 20, 20, 30, 30 ]
\end{verbatim}
We see that there is a unique conjugacy class in $W(H_4)$ of order
$12$. The class has $1200$ elements.
\begin{verbatim}
> C:=Conjugates(W,CC[28][3]);
> #C;
1200
\end{verbatim}
We set $T_1:=s_1s_3$ and look for an element $T_\infty$ of order $12$
such that  $s_1s_3T_\infty$ has characteristic polynomial
$T^4-T^3-T+1=(T-1)^2(T^2+T+1)$. There a fair number of such elements;
we select for example:
\begin{verbatim}
> Tinf;
[     0      1      1      1]
[ a + 1  a + 1      1      0]
[    -a -a - 1     -1      0]
[-a - 1 -a - 1     -a     -a]
\end{verbatim}
We can now construct the whole triple and compute the order of the
group they generate.
\begin{verbatim}
> T1:=s1*s3;
> T0:=Tinf^(-1)*T1^(-1);
> T0;
[      a       1       0       0]
[ -a - 1  -a - 1      -1      -1]
[2*a + 1 2*a + 2   a + 2   a + 1]
[ -a - 1  -a - 1  -a - 1      -a]
> T1;
[-1  0  0  0]
[ a  1  1  0]
[ 0  0 -1  0]
[ 0  0  1  1]
> CharacteristicPolynomial(T0);
x^4 - x^3 - x + 1
> CharacteristicPolynomial(T1);
x^4 - 2*x^2 + 1
> CharacteristicPolynomial(Tinf);
x^4 - x^2 + 1
> G:=sub<W|[T0,T1,Tinf]>;
> #G;
144
> IdentifyGroup(G);
<144, 127>
\end{verbatim}
The geometric monodromy group is then isomorphic to
$\SL_2(\FF_3)\rtimes S_3$ acting via its four dimensional faithful
irreducible representation with rational character. Note that this
representation has Schur index $2$ matching our  obstruction
calculation. Its smallest permutation representation is of degree
$48$. 

\section{Hurwitz example}

Consider the modular function $u_7(\tau):=\eta(7\tau)/\eta(\tau)$,
where $\eta(\tau)$ is Dedekind's eta-function~\cite{Zagier}. It is known that
$u_7^4$ is a Hauptmodul for $X_1(7)$. As a function of $t:=1728/j$,
where $j$ is the standard elliptic $j$-invariant, $u_7$ satisfies the
algebraic equation
$$
t(49u_7^8+13u_7^4+1)(7^4u_7^8+245u_7^4+1)^3-1728u_7^4=0.
$$
It also satisfies a fourth order differential equation of type G-II in
terms of $x=t^{-1}$
\begin{align}
\label{hurwitz-diff-eqn}
\begin{split}
x^2(x-1)^2\frac{d^4u_7}{dx^4}&+x(x - 1)(7x - 4)\frac{d^3u_7}{dx^3}
+(573/56x^2 - 5899/504x + 20/9)\frac{d^2u_7}{dx^2} \\
&+(12297/5488x - 39779/24696)\frac{du_7}{dx}-57/87808u_7=0\\
\end{split}
\end{align}
with exponents
$$
\begin{array}{c|l}
t&{\rm exponents}\\
\hline
0:& 0, 1/3, 2/3, 1 \\
1:&0, 1/2, 1, 3/2\\
\infty:&-1/28, 3/28, 1/4, 19/28\\
\end{array}.
$$
We see that this example is a conjugate of that in row~$\# 8$ of
Table~\ref{special-1/3}. 

Consider the following modular functions for $\Gamma(7)$.
    \begin{align*}
    \mathbf{x}(\tau) &\= q^{23/84}\prod_{n\equiv \pm4\Mod{7}}(1-q^n)^{-1},\\
    \mathbf{y}(\tau) &\= q^{11/84}\prod_{n\equiv \pm2\Mod{7}}(1-q^n)^{-1},\\
    \mathbf{z}(\tau) &\= q^{-13/84}\prod_{n\equiv \pm1\Mod{7}}(1-q^n)^{-1}.
    \end{align*}
    A basis of solutions to~\eqref{hurwitz-diff-eqn} is then
    $\mathbf{x}\mathbf{y}\mathbf{z},\mathbf{x}^2\mathbf{y},\mathbf{y}^2\mathbf{z},\mathbf{z}^2\mathbf{x}$. Note that $u_7=\mathbf{x}\mathbf{y}\mathbf{z}$.  This is basically
    given in the original paper of Hurwitz~\cite{Hurwitz}. For more
    details on the associated Klein curve see~\cite{Elkies}.
\section{A family of genus two curves}

Consider the G-II rigid local system $\calG$ with parameters 
$$
(\alpha_1,\alpha_2) = (1/6,5/6),
\quad
(\beta_1,\beta_2)= (1/6,5/6),
\quad
(\gamma_1,\gamma_2,\gamma_3,\gamma_4)= (1/5,2/5,3/5,4/5)
$$
and trace field~$\QQ$. We see from row~$\# 35$ of
Table~\ref{ratnl-indef} that the obstruction vanishes and hence it is
definable over~$\QQ$.  We find the following concrete realization
$$
T_0=\left(\begin{array}{rrrr}
 1&0&1&0\\
 1&1&1&1\\
 -1&0&0&-1\\
 0&0&0&1\\
 \end{array}
\right),
\quad
T_1=
\left(\begin{array}{rrrr}
 1&0&1&0\\
 0&1&0&1\\
 -1&0&0&0\\
 0&-1&0&0\\
 \end{array}
\right),
\quad
T_\infty=
\left(
\begin{array}{rrrr}
 -1&0&-1&-1\\
 0&0&0&-1\\
 1&0&0&0\\
 -1&1&0&0\\
\end{array}
\right)
$$
Computing the invariant Hermitian form we find that these matrices are
symplectic. Let
$\Gamma:=\langle T_0,T_1,T_\infty\rangle \subseteq\Sp_4(\ZZ)$ be the
geometric monodromy group.

We will show that $\calG$ arises from $H^1$ of a family of genus two
curves (so it is motivic). To find these we use an argument we learned
from D.~Roberts. We will see that the $\Gamma$ equals the geometric
monodromy of a finite monodromy G-II modulo $2$ (denoted $\calG_1$
below) and use it to produce a family of polynomials of degree $6$
which give rise to the desired curves.

 Bender~\cite{Bender} has given the following generators
for the symplectic group $\Sp_4(\ZZ)$.
$$
K:=\left(\begin{array}{rrrr}
 1&0&0&0\\
 1&-1&0&0\\
 0&0&1&1\\
 0&0&0&-1\\
 \end{array}
\right),
\qquad
L:=
\left(
\begin{array}{rrrr}
 0&0&-1&0\\
 0&0&0&-1\\
 1&0&1&0\\
 0&1&0&0\\
 \end{array}
\right)$$
In terms of these generators we have
$$
T_1=(KL^{-2})^3L^{-1}, \qquad
T_\infty=(KL^{-5})^4.
$$
We can easily verify using MAGMA that the geometric monodromy group
$\Gamma:=\langle T_0,T_1,T_\infty\rangle\leq \Sp_4(\ZZ)$ is the unique
subgroup of index two, namely, the commutator subgroup of $\Sp_4(\ZZ)$.
Here are the calculations
\begin{verbatim}
> G<K,L>:=Group<K,L | K^2=1, L^12=1, K*L^7*K*L^5*K*L = L*K*L^5*K*L^7*K,
L^2*K*L^4*K*L^5*K*L^7*K = K*L^5*K*L^7*K*L^2*K*L^4,
L^3*K*L^3*K*L^5*K*L^7*K = K*L^5*K*L^7*K*L^3*K*L^3,
(L^2*K*L^5*K*L^7*K)^2 = (K*L^5*K*L^7*K*L^2)^2,
L*(L^6*K*L^5*K*L^7*K)^2 = (L^6*K*L^5*K*L^7*K)^2*L,
(K*L^5)^5 = (L^6*K*L^5*K*L^7*K)^2>;
> H<T1,Tinf> := sub<G | (K*L^(-2))^3*L^(-1), (K*L^(-5))^4>;
> Index(G,H);
2
> H eq DerivedSubgroup(G);
true
\end{verbatim}
The quotient of $\Sp_4(\ZZ)$ by its level two congruence subgroup is
isomorphic to $\Sp_4(\FF_2)$, which is known to be isomorphic to
$S_6$. We see that $\Gamma$  maps
surjectively to $A_6$ under the projection map $f$.
\begin{verbatim}
> U:=SymmetricGroup(6);
> homs := Homomorphisms(G, U : Limit := 1);
> homs;
[
    Homomorphism of GrpFP: G into GrpPerm: U, Degree 6, Order 2^4 * 3^2 * 5 
    induced by
        a |--> (1, 2)(3, 4)(5, 6)
        b |--> (1, 2, 3)(4, 5)
]
> f:=homs[1];
> f(T1);
(1, 3, 2)(4, 5, 6)
> f(Tinf);
(1, 4, 6, 5, 3)
> f(Tinf^(-1)*T1^(-1));
(2, 3, 4)
\end{verbatim}

Consider now the G-II rigid local system $\calG_1$ with parameters
$$
(\alpha_1,\alpha_2) = (1/3,2/3),
\quad
(\beta_1,\beta_2)= (1/3,2/3),
\quad
(\gamma_1,\gamma_2,\gamma_3,\gamma_4)= (1/5,2/5,3/5,4/5).
$$
Its trace field is $\QQ$ and indeed we find it in row~$\# 5$ of
Table~\ref{ratnl-defn} among those with finite monodromy.  Since
$\mu=-1$ the system is not definable over $\QQ$.  Using a realization
over $\QQ(\sqrt{-3})$ with MAGMA we find that the geometric monodromy
group is isomorphic to $\SL_2(\FF_9)$, a central extension of $A_6$ by
$C_2$. This group has two irreducible representations of degree four
with Schur index two.

Here is the calculation with MAGMA.
\begin{verbatim}
function goursat(q)
         K<a>:=GF(q);
         R<x>:=PolynomialRing(K);
         w:=RootsInSplittingField(x^2+x+1)[1][1];
         T0:=[w + 1, 0, -1, 0, w, 1, -1, -1, w, 0, 0, (-w + 1)/w,
              0, 0, 0, 1/w];
         T1:= [-1, 0, 0, 1/w, -w - 1, -1, 1, 1/w, -w, -1, 0, (w +1)/w,
               -w, 0, 0, 0]; 
         Tinf:=[w, 1/w, -1/w, (-w^2 - 1)/w^2, w^2 + 2*w + 1, 1/w,
                (-w - 1)/w, (-w^3 -    w^2 - 1)/w^2, w^2 + w, (w +1)/w,
                (-w - 1)/w, (w^3 + w^2 +1)/-w^2, w^2, 0, 0, -w]; 
         G:=MatrixGroup<4,K|T0,T1,Tinf>;
         return G;
end function;
> G:=goursat(101^2);
> #G;
720
> z:=IsIsomorphic(G,SL(2,9));
> z;
true
> Z:=Center(G);
> #Z;
2
> Z:=Center(G);
> G/Z;
Permutation group acting on a set of cardinality 6
Order = 360 = 2^3 * 3^2 * 5
    (1, 2, 4)
    (1, 3, 2)(4, 5, 6)
    (2, 3, 4, 6, 5)
\end{verbatim}

Note that the parameters for $\calG$ and $\calG_1$ are equal up to
fractions with denominator $2$. This means that their respective local
monodromies are the same modulo $2$. It is clear, for example, that
the geometric monodromy of $\calG_1$ is isomorphic to
$A_6\cong \PSL_2(\FF_9)$ modulo $2$ as the center acts by $\pm 1$.

The three even permutations
$$
\sigma_0:=f(T_0)=(2, 3, 4), \sigma_1:=f(T_1)=(1, 3, 2)(4, 5, 6),
\sigma_\infty:=f(T_\infty)=(1, 4, 6, 5, 3)
$$
we computed above generate $A_6$ and correspond to a Belyi map with
cycle type $31^3,3^2,51$.  D. Roberts showed us
how this map is given by the following polynomial
$$
P(x,t):=x^3(x^3+3x^2-5)-t(3x-1)
$$
 Indeed we have
\begin{align*}
P(x,0)&=x^3(x^3+3x^2-5)\\
P(x,1)&=(x^2 + x - 1)^3\\
P(x,\infty)&=3x-1
\end{align*}
We consider the family of genus two curves defined by the
hyperelliptic equation
$$
C_t:\quad y^2=4P(x,t).
$$
Its Igusa invariants are
\begin{align*}
J_2&=2^2\cdot3\cdot5^2\cdot(4t +1)\\
J_4&=2\cdot3\cdot5^4\cdot(4t +1)^2\\
J_6&=2^2\cdot5^4\cdot(736t^3 + 2928t^2 - 564t + 25)\\
J_8&=3\cdot5^6\cdot(4t +1)(1856t^3 - 8112t^2 + 3156t - 25)\\
J_{10}&=2^8\cdot3^6\cdot5^5\cdot(t - 1)^4t^2
\end{align*}

By construction the Galois representation on the two torsion of the
Jacobian of $C_t$ for a generic $t\in \QQ$ is congruent modulo two to
that of the Artin representation associated to the Belyi map. We
therefore expect that the motive $H^1(C_t,\QQ)$ corresponds to
$\calG$. 

We check that this indeed the case by computing the linear
differential equation satisfied by periods of $C_t$.  Starting with
$\omega:=dx/y$ we apply $ D:=d/dt.$ reducing at each stage to a
representative differential form of the type $p(x)/ydx$ with $p$ of
degree at most five modulo exact differentials.  We then look for a
linear relation among $\omega,D\omega \ldots, D^4\omega$.

In this way we find that $\omega$ is annihilated modulo exact
differentials by the differential operator
\begin{align*}
&t^2(t-1)^4D^4 + t(t-1)^3(9t-5)D^3 
+ \frac1{180}(t-1)^2(3456t^2-3281t+715)D^2\\
&+ \frac1{450}(t-1)^2(3888t-2473)D +\frac1{810000}(186624t^2 - 378373t +
 169874).
\end{align*}
It is easy to verify that the differential equation is of
the expected type G-II with exponents
$$
(-1/6, 0, 1/6, 1),\quad( -1/6, 1/6, 5/6, 7/6), \quad (2/5, 3/5, 4/5, 6/5)
$$
at $t=0,1,\infty$ respectively.

\section{Infinite families}
\label{inf-families}
Considering the stringent conditions required for the invariant
Hermitian form $H$ to be definite, it can seem unlikely that there
would be infinitely many examples where $H$ and all its Galois
conjugates are definite.  However, just as for
hypergeometrics~\cite[Theorem~5.8]{Beukers-Heckman}, this is indeed
the case. Moreover, again like hypergeometrics, they come in families
all of which have the (finite) geometric monodromy group
$\Gamma\subseteq \SL_4(\CC)$ acting imprimitively (see
Table~\ref{inf-families-table}). This is not surprising in light of
Jordan's theorem (see the discussion at the end
of~\S\ref{descr-tables}).

In this table $r=m/n$ is an arbitrary rational number in lowest
terms and
$$
\Delta_{1,n}\equiv 
\begin{cases}
C_n^4 & v_2(n)=0\\
C_{n/2}^3C_n & v_2(n)\geq 1\\
\end{cases},
\qquad
\Delta_{2,n}\equiv 
\begin{cases}
C_n^3 & v_2(n)=0\\
C_{n/2}^3C_2 & v_2(n)=1\\
C_{n/4}^3C_4 & v_2(n)\geq 2\\
\end{cases},
$$
where $v_2$ is the valuation at $2$.
\begin{table}[H]
\[
\begin{array}{|c|c|c|c|c|c|c|}
\hline
& \beta,\alpha_1,\alpha_2& \gamma &\Gamma/A&A&|Z(\Gamma)|&\text{Impr}\\
\hline
1&1/2,1/2,r &-r/4,1/4-r/4,1/2-r/4,3/4-r/4  &D_4&\Delta_{1,n}&n&*\\
\hline
2&1/2,1/3,2/3 &r,-r/3,1/3-r/3,2/3-r/3 &A_4&\Delta_{2,n}&\gcd(n,4)&*\\
\hline
\end{array}
\]
\caption{Infinite families}
\label{inf-families-table}
\end{table}

We will show that in fact there are such examples for all of the cases
considered by Simpson~\cite{Simpson} with $g=0,k=3$ punctures and one
partition equal to $1^n$ for some $n$. For all of these systems there
are infinite families of examples all lying in a single geodesic in
the positive components $\TT_+^\irr$.

\subsection{Rational powers}
\label{ratnl-powers}
We start by showing that the rational powers of algebraic functions
satisfy differential equations of certain fixed order.
\begin{proposition}
\label{alg-diff-eqn}
    Let $f(t)$ be an algebraic function of degree $m$. Then for all 
    $r\in\QQ$ the function $f^r$ satisfies $\mathcal{L}_rf^r = 0$,
    where $\mathcal{L}_r$ is a differential operator of order $m$, whose 
    coefficients depend polynomially on $r$.
\end{proposition}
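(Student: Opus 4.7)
The plan is to differentiate $f^r$ repeatedly, write each derivative as $Q_k(r,t,f)\cdot f^r$ where $Q_k$ lies in the degree-$m$ field extension $V:=\CC(t)(f)$ of $\CC(t)$ and is polynomial in $r$, and then exploit that $V[r]:=V\otimes_{\CC(t)}\CC(t)[r]$ is a free $\CC(t)[r]$-module of rank $m$, forcing a linear dependence among $m+1$ consecutive derivatives.

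Concretely, let $P(t,y)\in\CC(t)[y]$ be the minimal polynomial of $f$, of degree $m$. Implicit differentiation of $P(t,f)=0$ gives $f'=-P_t(t,f)/P_y(t,f)\in V$, so $R(t,f):=f'/f\in V$. Writing $D=d/dt$, an easy induction with $Q_0=1$ and recursion $Q_{k+1}=D(Q_k)+rR\,Q_k$ (where $D$ acts on $Q_k$ via $\partial_t+f'\partial_f$, fixing $r$) yields $D^k(f^r)=Q_k(r,t,f)\,f^r$ with each $Q_k\in V[r]$ a polynomial in $r$ of degree at most $k$, and in particular of total ``arithmetic complexity'' bounded in terms of the coefficients of $P$.

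Next, since $V[r]$ is free of rank $m$ over $\CC(t)[r]$, the $m+1$ elements $Q_0,Q_1,\ldots,Q_m$ must be linearly dependent over $\CC(t)[r]$: obtain a dependence over the fraction field $\CC(t,r)$ by ordinary linear algebra, then clear denominators in $r$. This produces $c_0(r,t),\ldots,c_m(r,t)\in\CC(t)[r]$, not all zero, with $\sum_{k=0}^m c_k(r,t)Q_k=0$ in $V[r]$. Setting $\mathcal{L}_r:=\sum_{k=0}^m c_k(r,t)D^k$ then gives $\mathcal{L}_r f^r=0$, with the required polynomial dependence on $r$ and rational dependence on $t$.

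The remaining point, which I expect to be the main obstacle, is ruling out the possibility $c_m(r,t)\equiv 0$ so that $\mathcal{L}_r$ has order exactly $m$. Applying the very same recursion to each Galois conjugate $f=f_1,f_2,\ldots,f_m$ of $f$ over $\CC(t)$ gives $D^k(f_i^r)=Q_k(r,t,f_i)f_i^r$, so each $f_i^r$ is annihilated by $\mathcal{L}_r$. Thus it suffices to prove that for generic $r$ the $m$ conjugate powers $f_1^r,\ldots,f_m^r$ are linearly independent over $\CC(t)$: this can be done by a Wronskian computation, or by differentiating a hypothetical relation $\sum a_i(t) f_i^r=0$ with respect to $r$ (using $\partial_r f_i^r=(\log f_i)f_i^r$) and invoking the fact that the $\log f_i$ are distinct as meromorphic functions on the normalization of $\{P=0\}$, which forces all $a_i$ to vanish. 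Once this linear independence is established, the solution space of $\mathcal{L}_r$ has dimension at least $m$, hence exactly $m$, completing the proof.
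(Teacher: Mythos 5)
Your proof is correct and rests on the same core ideas as the paper's: both exploit the recursion computing $D^k(f^r)/f^r$ as an element of $\CC(t)(f)[r]$ (your $Q_{k+1}=D(Q_k)+rR\,Q_k$ is exactly the paper's logarithmic-derivative recursion $D_{n+1}(g)=D_n(g)'+D_n(g)\,g$ applied to $g=rf'/f$), and both ultimately hinge on the generic $\CC(t)$-linear independence of the conjugate powers $f_1^r,\dots,f_m^r$. The packaging differs: the paper writes $\mathcal{L}_r$ directly as the Wronskian ratio $W(y,y_1^r,\dots,y_m^r)/W(y_1^r,\dots,y_m^r)$, which is automatically monic of order $m$ (when defined) so no separate verification of the leading coefficient is needed, while you derive the dependence abstractly from dimension-counting in the rank-$m$ module $V[r]$ and then argue $c_m\not\equiv 0$ a posteriori. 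The residual hypothesis you flag as the main obstacle — that $f_1^r,\dots,f_m^r$ are generically independent, equivalently $W(y_1^r,\dots,y_m^r)\not\equiv 0$ — is present in the paper too, handled only by the parenthetical ``whenever $W(y_1^r,\dots,y_m^r)\ne 0$''; your $\log f_i$/Vandermonde sketch is a reasonable way to justify it, and correctly points to the genuine degenerate cases (e.g. $f=\sqrt{c(t)}$, where $\log f_2-\log f_1$ is constant and $f_2^r=(-1)^r f_1^r$ for all $r$), in which one obtains a lower-order operator that must then be padded to order $m$. So the one thing to be aware of is that both your proof and the paper's implicitly exclude such degenerate $f$; neither is quite a complete proof of the statement as literally worded, but they fail in the same place.
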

\begin{proof}
    Let $P(t,y)=0$ be the defining equation for $f$, 
    and let $y_1(t),\dots,y_m(t)$ be its solutions. Denote
    by $W(f_1,\dots,f_n)$ the Wronskian determinant and let us write
    \[\mathcal{L}_r[y] = \frac{W(y,y_1^r,\dots,y_m^r)}{W(y_1^r,\dots,y_m^r)}
    = y^{(m)}+A_{m-1}y^{(m-1)}+\dots+A_0.\]
    Define polynomial differential operators $D_n$ by
    \[\frac{f_i^{(n)}}{f_i}\es{=}D_n\Big(\frac{f_i'}{f_i}\Big).\]
    Then $D_{0}(f)=1$, $D_1(f)=f$, $D_2(f)=f'+f^2$,
    and in general they are defined by the recursion 
    $D_{n+1}(f) = (D_n(f))'+D_n(f)f$.
    In terms of these operators we can write the Wronskian in 
    terms of logarithmic derivatives as
    $W(f_1,\dots,f_n)=f_1\dots f_n\,\det(D_i(f_j'/f_j))_{i,j}$.
    Expanding the determinants in the definition of $\mathcal{L}_r$
    shows that $A_k$ can be expressed as rational functions in $r$
    whose coefficients are symmetric expressions in $y_1,\dots,y_m$ 
    and their derivatives and thus are rational functions of $t$.
    
    For generic $r$ (more precisely, whenever $W(y_1^r,\dots,y_m^r)\ne 0$)
    the functions $y_1^r,\dots,y_m^r$ form the full space of solutions of $\mathcal{L}_r$, and thus the singularities of $\mathcal{L}_r$ are 
    contained in the set of singularities of $y_1^r,\dots,y_m^r$ 
    together with the set of points $t$ where one of $y_i$ becomes $0$. 
    In terms of the defining equation $P(t,y)=p_m(t)y^m+\dots+p_0(t)$, 
    these are exactly the values of $t$ where $p_0(t)p_m(t)$ or the 
    discriminant of $P$ vanishes. 
\end{proof}

For instance, consider $u_2(\tau):=(\eta(2\tau)/\eta(\tau))^{24}$. It
is known that $u_2$ is a Hauptmodul for $\Gamma_0(2)$ and satisfies
the algebraic equation
$$
\label{t2-eqn}
 \qquad A_2(u,t):=t(1+256u)^3-1728u=0, \qquad t:=1728/j,
$$
where $j$ is the standard elliptic $j$-invariant.
For any $r\in\QQ$ its $r$-th power is annihilated
by the $3$-rd order differential operator
    \begin{equation*}
    x^3(x-1)\frac{d^3}{dx^3}
    +x^2(4x-5/2)\frac{d^2}{dx^2}
    +x\Big(\frac{20}{9}x + \frac{(3r+2)(r-1)}{4}\Big)\frac{d}{dx}
    +\frac{r^2(r-1)}{4},
    \end{equation*}
when expressed in $x=t(\tau)=1728/j(\tau)$. The local exponents are
	$$
	0: (r,-r/2,(1-r)/2), \quad
	1: (0, 1/2, 1), \quad
	\infty: (0, 1/3, 2/3).
	$$
Hence these are hypergeometric equations.

\subsection{Simpson even and odd families}
\label{simpson-even-odd}
Given an odd positive integer $N>1$ consider the hypergeometric series
$$
f_N(t):={}_2F_1\left(
\begin{array}{cc}
\frac{N-1}{2N}&\frac{N+1}{2N}\\
\frac32& 1
\end{array}
|\,t\right).
$$
It is an algebraic function of $t$ satisfying $P_N(f_N(t),t)=0$ for a
polynomial $P_N(u,t)\in \QQ[u,t]$. This polynomial $P_N$ can be given
explicitly; the information we need is the shape of its Newton polygon
$\Delta_N$, convex hull of the $(r,s)\in\ZZ^2$ for which the monomial
$u^rt^s$ in $P_N$ has a non-zero coefficient. The polygon $\Delta_N$
is in fact the triangle of vertices $(0,0),(1,0),(N,(N-1)/2)$. If we
orient the boundary of the triangle counter-clockwise starting at the
origin the three sides have slopes $0,1/2,-\kappa_N$, where
$\kappa_N:=(N-1)/2N$, respectively.

For example, for $N=5$ we find
$$
P_5(t,u)=16u^5t^2 - 500u^3t + 3125u - 3125
$$
and its Newton polygon
\begin{figure}
\begin{center}
\begin{tikzpicture}

\draw[step=1.0,black,thin] (0,0) grid (5,2);

\draw[fill=black] (0,0) circle (.1);
\draw[fill=black] (1,0) circle (.1);
\draw[fill=black] (3,1) circle (.1);
\draw[fill=black] (5,2) circle (.1);

\draw[thick] (0,0) -- (1,0);
\draw[thick] (1,0) -- (5,2);
\draw[thick] (5,2) -- (0,0);

\end{tikzpicture}
\end{center}
\caption{$\Delta_5$} \label{fig:Delta-5}
\end{figure}
with sides of slope $0,1/2,-2/5$.

At a zero or pole of $f_N(t)$ we have $t=0,1$ or $t=\infty$. Hence by
Proposition~\ref{alg-diff-eqn} $f_N(t)^r$ for $r\in \QQ$ satisfies a
linear differential equation of order $N$ with singularities only at
$t=0,1,\infty$. 

In general, the exponents at $t=0,\infty$ of the differential equation
satisfied by an algebraic function $f$ of this kind can be read-off
from its Newton polygon $\Delta$. It can be proved that these are as
follows.

Assume that the Newton polygon of $f$ has no vertical segments. Then
there exist unique leftmost and rightmost vertices of $\Delta$, say
$p,q$ respectively. Let $l$ be the line joining $p$ and $q$. We can
distinguish the top and bottom sides of $\Delta$ as those above and
below $l$ respectively.

For each slope $\kappa\in\QQ$ of a side $\delta$ of the Newton polygon
consider the sequence
$$
[\kappa]:=0-\kappa r,\frac1d-\kappa r,\ldots,\frac{e-1}d-\kappa r,
$$ 
where $d$ is the denominator of $\kappa$ and $e$ is the horizontal
width of $\delta$.

The exponents at $t=0$ for $f^r$ are $[\kappa_1],[\kappa_2],\ldots$,
where $\kappa_1,\kappa_2,\ldots$ runs over the slopes of the bottom
sides.  The exponents at $t=\infty$ are similarly determined by the
slopes of the top sides.  The exponents at $t=1$ are independent of
$r$ and can be computed directly from the Newton polygon of
$f(u,t+1)$.

In the case of $f_N$ the bottom slopes are $0,1/2$, the only top slope
is $-\kappa_N$ and we obtain the following
$$
\begin{array}{c|l}
t& {\rm exponents}\\
\hline
0 &0,-r/2,1/2-r/2,\ldots,(N-2)/2-r/2\\
1 &0,1/2,1,\ldots,(N-1)/2\\
\infty &\kappa_N r,1/N+\kappa_N r,\ldots,(N-1)/N+\kappa_N r\\
\end{array}.
$$
For example, when $N=5$ these exponents are
$$
\begin{array}{c|l}
t& {\rm exponents}\\
\hline
0 &0,-r/2,1/2-r/2,1-r/2,3/2-r/2\\
1 &0,1/2,1,3/2,2\\
\infty &2/5r,1/5+2/5r,3/5+2/5r,3/5+2/5r,4/5+2/5r
\end{array}.
$$

If $r\in \QQ$ is not an integer then the multiplicity of these
exponents is $(m,m,1),(m+1,m),(1,\ldots,1)$, where $m:=(N-1)/2$. These
are precisely the multiplicities of Simpson's odd rank case family of
rigid local systems. Hence we have obtained a geodesic
completely contained in the positive components $\TT^\irr_+$ of this system's
parameter space.

A completely analogous discussion holds for $N$ even with the same
definition of $f_N$. Here it is more convenient to consider the
algebraic equation for $f_N^2$, which is the hypergeometric function
$$
f_N(t)^2={}_2F_1\left(
\begin{array}{cc}
\frac{N-1}N&\frac{N+1}N\\
\frac32& 2
\end{array}
| t\right).
$$
It satisfies an algebraic equation of degree $N$ with Newton polygon
the triangle of vertices $(0,0),(1,0),(N,N)$. The exponents are the
same as in the case $N$ odd. The multiplicities however are now
$(1,m-1,m),(m,m),(1,\ldots,1)$, where $m:=N/2$. These are the
multiplicities of Simpson's even rank case family of rigid local
systems~\cite{Simpson}. Again we have obtained a geodesic completely
contained in the positive components $\TT^\irr_+$ of the parameter
space.  For example, for $N=4$ we get a geodesic for Goursat G-II up
to a twist.
$$
\begin{array}{c|l}
t& {\rm exponents}\\
\hline
0 &0,-r,1/2-r,1-r\\
1 &0,1/2,1,3/2\\
\infty &3/4r,1/4+3/4r,1/2+3/4r,3/4+3/4r
\end{array}
$$

\subsection{Simpson extra case of rank $6$}
The Hauptmodul $u_5:=(\eta(5\tau)/\eta(\tau))^6$ satisfies the
equation
$$
A_5(u,t)=t(3125u^2+250u+1)^3-1728u=0,
$$
whose Newton polygon is a triangle with vertices $(0,1),(4,1),(0,1)$
and slopes $-1,1/5,0$.

Fractional powers of $u_5(\tau) = (\eta(5\tau)/\eta(\tau))^6$ give the
rank $6$ extra case rigid local system of Simpson.  Explicitly, in
terms of $x=1/t(\tau)$, $u_5^r$ satisfies the differential equation
    \begin{align*}
    &x^4(x-1)^2\frac{d^6}{dx^6}
    +x^3(x-1)(17x-10)\frac{d^5}{dx^5}\\
    +&x^2\Big(\frac{-3r^2+2r+432}{5}x^2 + \frac{108r^2-72r-18377}{180}x + \frac{220}{9}\Big)\frac{d^4}{dx^4}\\
    +&x\Big(\frac{8r^3 - 118r^2 + 74r + 3720}{25}x^2 - \frac{576r^3 - 5796r^2 + 3528r + 209915}{1800}x + \frac{40}{3}\Big)\frac{d^3}{dx^3}\\
    +&\Big(\frac{-45r^4 + 860r^3 - 5145r^2 + 2950r + 45024}{625}x^2 - \frac{576r^3 - 2676r^2 + 1448r + 26135}{900} x + \frac{40}{81}\Big)\frac{d^2}{dx^2}\\
    +&\Big(\frac{24r^5 - 415r^4 + 2790r^3 - 7925r^2 + 3846r + 15120}{3125}x - \frac{64r^3 - 164r^2 + 72r + 315}{900} \Big)\frac{d}{dx}\\
    -&\frac{r^2(r-1)(r-2)(r-3)(r-4)}{3125}.
	\end{align*}
        The exponents of this equation for generic $r \in \QQ$ are
$$
\begin{array}{c|l}
t& {\rm exponents}\\
\hline
0 &r,-r/5,1/5-r/5,2/5-r/5,3/5-r/5,4/5-r/5\\
1 &0,1/2,1,3/2,2,3\\
\infty &0,1/3,2/3,1,4/3,5/3\\
\end{array}
$$
with multiplicities $(4, 2), (2, 2, 2), (1,\ldots,1)$. This is then a
geodesic in the positive components $\TT^\irr_+$ of Simpson's extra
case. A special case reduces to a hypergeometric series
$$
\frac{u_5}t=  {}_3F_2\left(
\begin{array}{ccc}
\tfrac12&\tfrac16&\tfrac56\\
\tfrac45&\tfrac65&1
\end{array}
| t\right)^6.
$$

For $r=1,3$ the equation reduces to a hypergeometric equation of order
$3$, and for $r=2,4,8,14$ it reduces to an equation of order $5$ with
rigid monodromy of Simpson's odd type.

\subsection{Hypergeometric}
Theorem~5.8 in~\cite{Beukers-Heckman} describes a geodesic in the case
of hypergeometric rigid local systems. This can be made explicit in
terms of fractional powers of a fixed algebraic function like all the
previous examples. We have already encountered one case (see the
example at the end of~\S\ref{ratnl-powers}). We illustrate this
further with an instance of rank $5$.

Consider the algebraic equation
$$
u(1-u)^4-\frac{4^4}{5^5}t=0.
$$
We can solve for $u$ as a function of $t$ by inversion. Let 
$$
f(t)=\frac{5^5}{4^4}\frac{u(t)}t= 1 + t_1 + \frac{13}8t_1^2 + \frac{51}{16}t_1^3 +
\frac{1771}{256}t_1^4 + \frac{4095}{256}t_1^5 +O(t_1^6), \qquad
t_1:=\frac{5^5}{4^4}t. 
$$
We find that
$$
f(t)= {}_4F_3\left(
\begin{array}{cccc}
\tfrac45&\tfrac65&\tfrac75&\tfrac85\\
\tfrac32&\tfrac54&\tfrac74&2
\end{array}
| t\right).
$$
We compute the local exponents of $f^r$ for $r\in \QQ$ and find
$$
\begin{array}{c|l}
t& {\rm exponents}\\
\hline
0 &0,-r,1/4-r,1/2-r,3/4-r\\
1 &1/2,0,1,2,3\\
\infty & 4/5r,1/5+4/5r,2/5+4/5r,3/5+4/5r,4/5+4/5r\\
\end{array}
$$
We obtain the following identity between hypergeometric functions
$$
{}_4F_3\left(
\begin{array}{cccc}
\tfrac45&\tfrac65&\tfrac75&\tfrac85\\
\tfrac32&\tfrac54&\tfrac74&2
\end{array}
| t\right)^r=
{}_5F_4\left(
\begin{array}{ccccc}
4/5r&1/5+4/5r&2/5+4/5r&3/5+4/5r&4/5+4/5r\\
1+r&3/4+r&1/2+r&1/4+r&1
\end{array}
| t\right).
$$

\subsection{Goursat II}
\label{G-II-inf}

There is an another geodesic in the case G-II apart from that
in~\S\ref{simpson-even-odd} for $n=4$.  Consider the modular unit
$u_3(\tau):=\eta(3\tau)/\eta(\tau)$. It is a classical fact that
$u_3^{12}$ is a Hauptmodul for the modular curve $X_0(3)$ and
satisfies the algebraic equation $A_3(3^6u_3^{12},1728j^{-1})=0$,
where
$$
\label{t3-eqn}
 \qquad A_3(u,t):=t(u+27)(u+3)^3-1728u. 
$$
The Newton polygon of $A_3$ is a triangle with vertices
$(0,1),(4,1),(0,1)$ and slopes $-1,1/3,0$. 
The $4$-th order differential equation satisfied by  $u_3^r$ is
    \begin{multline}
      x^2(x-1)^2\frac{d^4}{dx^4} +x(x-1)(7x-4)\frac{d^3}{dx^3}
      +\Big(\frac{-6r^2+3r+92}{9}x^2 + \frac{24r^2 - 12r - 421}{36}x + \frac{20}{9}\Big)\frac{d^2}{dx^2}\\
      +\Big(\frac{8r^3-33r^2+13r+60}{27}x - \frac{64r^3 - 192r^2 + 68r
        + 345}{216}\Big)\frac{d}{dx} -\frac{r^2(r-1)(r-2)}{27},
    \end{multline}
when expressed in $x=1/t(\tau)=j(\tau)/1728$. The exponents of this equation
for generic $r \in \QQ$ are
$$
\begin{array}{c|l}
t& {\rm exponents}\\
\hline
0 &0,1/3,2/3,1\\
1 &0,1/2,1,3/2\\
\infty &r,-r/3,1/3-r/3,2/3-r/3\\
\end{array}
$$

We have
$$
\frac{u_3}t= {}_2F_1\left(
\begin{array}{cc}
\tfrac7 {12}&\tfrac1 4\\
\tfrac4 3&1
\end{array}
| t\right)^4, \qquad t:=\frac{1728} j.
$$

Here is a detailed description of this geodesic. The four
exponents at $\infty$ are
\[\gamma_1=3r,\;\;\gamma_2=-r,\;\;\gamma_3=-r+1/3,\;\;\gamma_4=-r+2/3.\]
We plot these modulo $\ZZ$ as a function of $r$ (see
Figure~\ref{G-II-geodesic-gamma}).
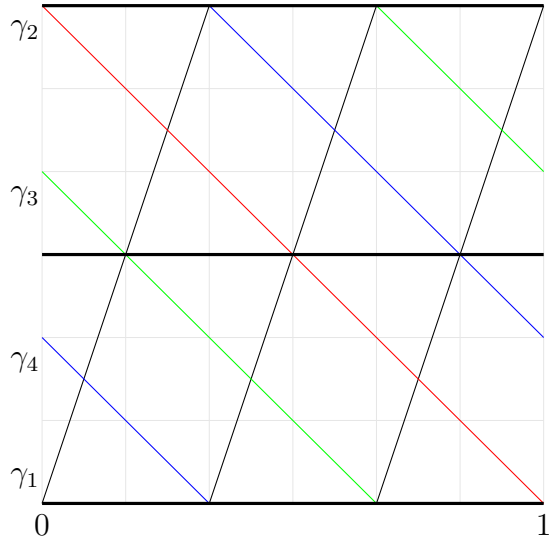
\begin{figure}
\begin{center}
    \begin{tikzpicture}
    \tikzstyle{axis}=[very thick,->]; 
    
    \begin{scope}[scale=1.1]   
    \definecolor{cgray}{rgb}{0.9,0.9,0.9};
    \definecolor{cg2}{rgb}{1,0,0};
    \definecolor{cg3}{rgb}{0,1,0};
    \definecolor{cg4}{rgb}{0,0,1};
    \definecolor{cg1}{rgb}{0,0,0};
    
    \draw[step=1cm,cgray,very thin] (0,-3) grid (6,3);
    \draw[cg1] (0,-3) -- (2,3);
    \draw[cg1] (2,-3) -- (4,3);
    \draw[cg1] (4,-3) -- (6,3);    
    \draw[cg2] (0,3) -- (6,-3);
    \draw[cg3] (0,1) -- (4,-3);
    \draw[cg3] (4,3) -- (6,1);
    \draw[cg4] (0,-1) -- (2,-3);    
    \draw[cg4] (2,3) -- (6,-1);

    \draw[black, very thick] (0,3) -- (6,3);    
    \draw[black, very thick] (0,0) -- (6,0);
    \draw[black, very thick] (0,-3) -- (6,-3);

    \node at (0.1,-2.7)[left] {$\gamma_1$};
    \node at (0.1,2.7)[left] {$\gamma_2$};
    \node at (0.1,0.7)[left] {$\gamma_3$};
    \node at (0.1,-1.3)[left] {$\gamma_4$};

    \node at (0,-3)[below] {$0$};
    \node at (6,-3)[below] {$1$};
    
    \end{scope}
    \end{tikzpicture}
\end{center}
\caption{G-II geodesic $\gamma$}
\label{G-II-geodesic-gamma}
\end{figure}
The first condition ($n_1=2$ in the notation
of~\eqref{goursat-pos-defn-1}) for these parameters for generic $r$ to
be in the positive components $\TT^\irr_+$ is that there are two in
each of the indicated horizontal strips. This is visible in the plot.

The second condition for positive definiteness involves the parameters
    \[\delta_1=2r,\;\;\delta_2=2r+2/3,\;\;\delta_3=2r+1/3,\;\;
      \delta_4=-2r+2/3,\;\;\delta_5=-2r+1/3,\;\;\delta_6=-2r.\] For
    generic $r$ there should be four $\delta$'s in the interval
    $(1/6,5/6)$.  A plot of these as functions of $r$ modulo $\ZZ$ is
    given in Figure~\ref{G-II-geodesic-delta} where this condition is
    visible.

\begin{figure}
\begin{center}
    \begin{tikzpicture}
    \begin{scope}[scale=1.1]
    \tikzstyle{axis}=[very thick,->];
    \definecolor{cgray}{rgb}{0.9,0.9,0.9};
    \definecolor{cr1}{rgb}{0.9,0.9,0.1};
    \definecolor{cr2}{rgb}{0.1,0.9,0.9};
    \definecolor{cr3}{rgb}{0.9,0.1,0.9};
    \definecolor{cr4}{rgb}{0.1,0.1,0.9};
    \definecolor{cr5}{rgb}{0.1,0.9,0.1};
    \definecolor{cr6}{rgb}{0.9,0.1,0.1};
 
    \draw[black, very thick] (0,-2) -- (6,-2);    
    \draw[black, very thick] (0,2) -- (6,2);    
   
    \draw[step=1cm,cgray,very thin] (0,-3) grid (6,3);
    \draw[cr1] (0,-3) -- (3,3);
    \draw[cr1] (3,-3) -- (6,3);
    \draw[cr2]  (0,1) -- (1,3);
    \draw[cr2] (1,-3) -- (4,3);
    \draw[cr2] (4,-3) -- (6,1);
    \draw[cr3] (0,-1) -- (2,3);
    \draw[cr3] (2,-3) -- (5,3);
    \draw[cr3] (5,-3) -- (6,-1);
    \draw[cr6] (0,3) -- (3,-3);
    \draw[cr6] (3,3) -- (6,-3);
    \draw[cr4]  (0,1) -- (2,-3);
    \draw[cr4] (2,3) -- (5,-3);
    \draw[cr4] (5,3) -- (6,1);
    \draw[cr5] (0,-1) -- (1,-3);
    \draw[cr5] (1,3) -- (4,-3);
    \draw[cr5] (4,3) -- (6,-1);
    
    \node at (0,-3)[below] {$0$};
    \node at (6,-3)[below] {$1$};
    
    \node at (0.3,-2.5)[left] {$\delta_1$};
    \node at (0.3,1.5)[left] {$\delta_2$};
    \node at (0.3,-0.5)[left] {$\delta_3$};
    \node at (0.3,0.5)[left] {$\delta_4$};
    \node at (0.3,-1.5)[left] {$\delta_5$};
    \node at (0.3,2.5)[left] {$\delta_6$};
    \end{scope}
    \end{tikzpicture}
\end{center}
\caption{G-II geodesic $\delta$}
\label{G-II-geodesic-delta}
\end{figure}
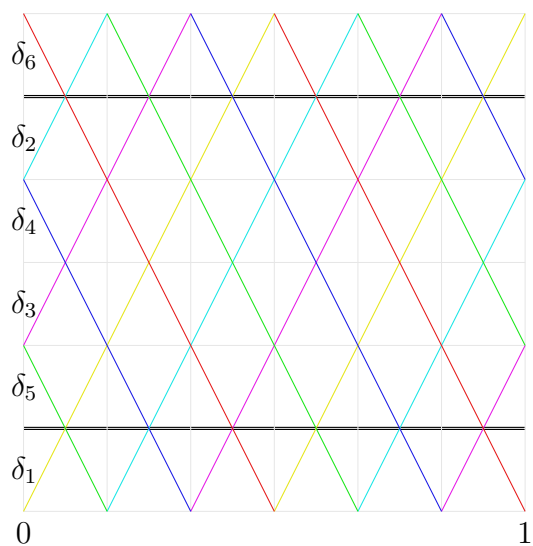

\end{document}